 \newtheorem{theorem}{Theorem}[section]
 \newtheorem{Def}[theorem]{Definition}
 \newtheorem{Prop}[theorem]{Proposition}
 \newtheorem{Lem}[theorem]{Lemma}
 \newtheorem{Cor}[theorem]{Corollary}
 \numberwithin{equation}{section}
\begin{document}

\title{Moran sets and hyperbolic boundaries}

\author{Jun Jason Luo}
\address{Department of Mathematics,  Shantou
University, Shantou 515063, China} \email{luojun2011@yahoo.com.cn}

\thanks{The research is supported by STU Scientific Research Foundation for Talents (no. NTF12016).}

\keywords{Moran set, augmented tree,  hyperbolic boundary, Lipschitz
equivalence.}

\subjclass[2010]{Primary 28A80}
%
\date{\today}

\begin{abstract}
In the paper, we prove that a Moran set is homeomorphic to the
hyperbolic boundary of the representing symbolic space in the sense
of Gromov, which generalizes the results of Lau and Wang [{\it
Self-similar sets as hyperbolic boundaries}, Indiana U. Math. J.
{\bf 58} (2009), 1777-1795]. Moreover, by making use of this, we
establish the Lipschitz equivalence of a class of Moran sets.
\end{abstract}

\maketitle

\bigskip

\begin{section}{\bf Introduction}

For an iterated function system (IFS) of contractive similitudes on
${\Bbb R}^d$ and the associated self-similar set $K$, there is a
symbolic space which contains a natural tree structure and
represents every point of $K$. Kaimanovich \cite{Ka03} first
proposed  a {\em hyperbolic graph structure (called augmented tree)}
on the symbolic space of the Sierpinski gasket by adding horizontal
edges (corresponding to the (vertical) edges of the tree), and he
observed the relationship between the hyperbolic boundary and the
gasket. Lau and Wang (\cite{LaWa09}, \cite{Wa12}) developed this
idea to more general self-similar sets. It was proved that if an IFS
satisfies the open set condition (OSC), even weak separation
condition (WSC), then the augmented tree is hyperbolic in the sense
of Gromov and the hyperbolic boundary of the tree is shown to be
homeomorphic to $K$; moreover under certain mild condition, the
homeomorphism is actually a H\"older equivalent map. Recently, this
setup has been frequently used to study the random walks on trees
and their Martin boundaries  (\cite{Ki10}, \cite{JuLaWa12},
\cite{LaWa11}) and Lipschitz equivalence problem \cite{LaLu12}.

\medskip

As generalizations of  self-similar sets, Moran sets were introduced
by Moran \cite{Mo46}, which have abundant exotic fractal structures.
Let $\{n_k\}_{k\geq 1}$ be a sequence of positive integers and
$\{r_k\}_{k\geq 1}$ be a sequence of positive numbers satisfying
$n_k\geq 2, 0<r_k<1$ and $n_kr_k\leq 1$.  For any $k\geq 1$, let
$D_k=\Pi_{j=1}^k\{1,2,\dots,n_j\}=\{i_1\cdots i_k: 1\leq i_j\leq n_j, 1\leq j\leq k\}$ be the set
of words (or $k$-multi-indexes) and $D=\bigcup_{k\geq 0}D_k$ be the
set of all finite words (convention $D_0=\emptyset$), and let
$D_{\infty}=\{i_1 i_2\cdots : 1\leq i_j\leq n_j, j=1,2,\dots\}$ be
the set of all infinite words.  For integers $\ell>k\geq 1$, if ${\mathbf i}=i_1\cdots i_k\in D_k$
and ${\mathbf j}=j_1\cdots j_m\in \Pi_{j={k+1}}^\ell\{1,2,\dots,n_j\}$, we denote by $\mathbf
{ij}=i_1\cdots i_kj_1\cdots j_m\in D_{\ell}$ the concatenation.

\begin{Def}\label{def1.1}
Suppose that $J\subset {\mathbb R}^d$ is a compact set with nonempty
interior. The collection of subsets ${\mathcal F}=\{J_{\mathbf i}:
{\mathbf i}\in D\}$ of $J$ has the Moran structure, if it satisfies:

(i) $J_{\emptyset}=J$;

(ii) for any ${\mathbf i}\in D$, $J_{\mathbf i}$ is geometrically
similar to $J$, that is, there exists a similarity $S_{\mathbf i}:
{\mathbb R}^d \to {\mathbb R}^d$ such that $J_{\mathbf i}=S_{\mathbf
i}(J)$;

(iii) for any $k\geq 1$ and ${\mathbf i}\in D_{k-1}$,  $J_{{\mathbf
i}1},\dots, J_{{\mathbf i}n_k}$ are subsets of $J_{\mathbf i}$ and
$\text{int}(J_{{\mathbf i}i})\cap \text{int}(J_{{\mathbf
i}j})=\emptyset$ for $i\ne j$ where $\text{int}(A)$ denotes the
interior of a set $A$;

(iv) for any $k\geq 1$ and ${\mathbf i}\in D_{k-1}, 1\leq j\leq
n_k$, we have $$\frac{|J_{{\mathbf i}j}|}{|J_{\mathbf i}|}=r_k$$
where $|A|$ denotes the diameter of $A$.

\end{Def}

We call $E:= \bigcap_{k\geq 0} \bigcup_{{\mathbf i}\in
D_k}J_{\mathbf i}$  the \emph{(homogeneous) Moran set}.  For
${\mathbf i}\in D_{k-1}$ with $k\geq 1$, we call $J_{\mathbf i}$ a
\emph{basic set of order $k$} of the Moran set.  Let ${\mathcal
M}:={\mathcal M}(J, \{n_k\}, \{r_k\})$  denote the class of the
Moran sets satisfying (i)-(iv). From the definition above, if the
positions of the basic sets  are different, then the Moran sets are
different.  Compared with self-similar sets, Moran sets have more
fractal structures as following:

(1) the placements of the basic sets at each step of the geometric
construction can be arbitrary;

(2) the contraction ratios can be different at different steps;

(3) the cardinality of the basic sets in replacement at different
steps can be different.

\medskip

The systematical study on the geometric structure and dimension
theory  of Moran sets was developed by \cite{FeWeWu97},
\cite{HuRaWeWu00}, \cite{We01}.  It is well-known that all Moran
sets in ${\mathcal M}$ have the same Hausdorff and packing
dimensions provided $r :=\inf_k r_k>0$ \cite{FeWeWu97}. In our
consideration we always assume this condition holds.

\medskip

However, the relationship between Moran sets and the hyperbolic
structures of the representing symbolic spaces has not been
established yet. Similarly to the self-similar set, we can define
the corresponding symbolic space and  augmented tree of a Moran set
or class. Let
\begin{equation}\label{add.equ}
X_n=\{i_1\cdots i_k\in D: r_1\cdots r_k \leq r^n <
r_1\cdots r_{k-1}\}.
\end{equation}
Denote  $X=\bigcup_{n\geq 0}X_n$,
where $X_0=\emptyset$. Note that for $n\geq 0, n\ne \ell$,
$X_n\cap X_{\ell}=\emptyset$ holds. For each word ${\mathbf i}\in
X_n$, there exists a unique word ${\mathbf j}\in X_{n-1}$ and a
multi-index ${\mathbf k}=k_1\cdots k_{\ell}$ such that ${\mathbf
i}={\mathbf {jk}}$. Then we can define a vertical edge  in this way,
the set of all vertical edges is denoted by ${\mathcal E}_v$. The
horizontal edge is defined as follows: ${\mathbf i}\sim {\mathbf j}$
is called a {\em horizontal edge} if for some $ n\geq 1$ and
${\mathbf {i, j}}\in X_n$, $J_{\mathbf i}\cap J_{\mathbf j}\ne
\emptyset$, the set of all horizontal edges is denoted by ${\mathcal
E}_h$. Let ${\mathcal E}={\mathcal E}_v\cup {\mathcal E}_h$, then we
call $(X,{\mathcal E})$ an \emph{augmented tree}  (in Kaimanovich's
sense \cite{Ka03})  induced by the triplet $(J, \{n_k\}, \{r_k\})$
(or a Moran set $E\in {\mathcal M}$). There is a hyperbolic metric
$\rho_a$ (see Section 2) on $X$ which induces a {\em hyperbolic
boundary} $\partial X := \hat{X}\setminus X$ where $\hat{X}$ is the
completion of $X$ under the $\rho_a$.

\medskip

One of the main purposes of the present paper is to extend Lau and
Wang's results on self-similar sets \cite{LaWa09} to that on Moran
sets as following.

\begin{theorem}\label{th1.2}
Let $E$ be a Moran set, $(X,{\mathcal E})$ be the induced augmented
tree. Then

(i) $(X,{\mathcal E})$ is a hyperbolic graph in the sense of Gromov.

(ii) $E$ is homeomorphic to the hyperbolic boundary $\partial
(X,{\mathcal E})$. Furthermore, the H\"older equivalence holds if we
assume the additional condition (H) (see Section 3).
\end{theorem}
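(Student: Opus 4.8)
The plan is to follow the blueprint established by Lau and Wang for self-similar sets, adapting each step to the non-uniform setting of Moran sets where the contraction ratios $r_k$ vary with the level. The essential new difficulty is that the levels $X_n$ defined in (\ref{add.equ}) no longer correspond to words of a fixed length; a word ${\mathbf i}\in X_n$ may be obtained from its parent in $X_{n-1}$ by a concatenation of variable length $\ell$. I must therefore verify that the tree structure remains locally finite and that the geometric comparisons between basic sets at the same level remain uniform. The standing assumption $r:=\inf_k r_k>0$ is what makes this possible: for ${\mathbf i}\in X_n$ the diameter $|J_{\mathbf i}|=r_1\cdots r_k|J|$ satisfies $r^{n+1}<|J_{\mathbf i}|/|J|\le r^n$, so all basic sets indexed by $X_n$ have comparable size (up to the factor $r$), exactly as in the self-similar case.

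\medskip

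For part (i), I would prove hyperbolicity via the standard criterion for augmented trees: it suffices to show that the horizontal connections do not proliferate, i.e. that there is a uniform bound on the number of basic sets of order $n$ (words in $X_n$) that can intersect a given one, and that horizontal geodesics stay close to the tree's confluent structure. First I would establish bounded degree: since the sets $\{J_{\mathbf i}:{\mathbf i}\in X_n\}$ have uniformly comparable diameters and bounded overlap (only at interiors-disjoint boundaries by Definition \ref{def1.1}(iii)), a volume/packing argument in ${\mathbb R}^d$ bounds the number of horizontal neighbors of each vertex. Then I would verify the key inequality that for ${\mathbf i}\sim{\mathbf j}$ horizontally, their parents are either equal or horizontally adjacent (the augmented-tree axiom), which follows because $J_{\mathbf i}\cap J_{\mathbf j}\ne\emptyset$ forces $J_{{\mathbf i}'}\cap J_{{\mathbf j}'}\ne\emptyset$ for the parents ${\mathbf i}',{\mathbf j}'\in X_{n-1}$ via the nesting $J_{\mathbf i}\subset J_{{\mathbf i}'}$. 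With these two properties, the general hyperbolicity theorem for augmented trees (Kaimanovich, and Lau–Wang) applies verbatim to give Gromov hyperbolicity of $(X,{\mathcal E})$.

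\medskip

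For part (ii), the homeomorphism is constructed by sending a boundary point of $X$, represented by a geodesic ray $\mathbf{i}_1\mathbf{i}_2\cdots$ with $\mathbf{i}_n\in X_n$, to the unique point $\bigcap_n J_{\mathbf{i}_n}\in E$; this intersection is a single point because the diameters $|J_{\mathbf{i}_n}|\le r^n|J|\to 0$. I would show this map $\Phi:\partial X\to E$ is well-defined (independent of the geodesic representative), continuous, injective, and surjective. Surjectivity uses that every $x\in E$ lies in a nested sequence of basic sets; injectivity and continuity rely on comparing the Gromov/visual metric $\rho_a$ on $\partial X$ with the Euclidean metric on $E$ through the Gromov product, using that the confluent of two rays corresponds to the last common basic set containing both limit points. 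For the Hölder equivalence under the additional condition (H), I would quantify these comparisons: condition (H) should guarantee that whenever two basic sets at level $n$ are Euclidean-close, they are horizontally connected within a uniformly bounded number of steps, which converts the geometric separation bound $\mathrm{dist}(J_{\mathbf i},J_{\mathbf j})\gtrsim r^n$ into a bi-Hölder estimate between $\rho_a$ and the Euclidean metric.

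\medskip

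I expect the main obstacle to be the uniform control of horizontal edges across levels. In the self-similar case all words at depth $n$ have identical diameter $\rho^n|J|$, so the packing argument and the Gromov-product comparison are immediate; here the words in $X_n$ have variable length and the ratio of diameters within a level is only controlled up to the factor $r$. The delicate point is proving that this level-wise comparability is enough to keep the horizontal-edge degree uniformly bounded and, more importantly, to keep the parent-adjacency axiom intact despite parents being reached by concatenations of differing lengths. Once these combinatorial-geometric facts are secured, the hyperbolicity and the boundary identification should follow the established machinery without essentially new ideas.
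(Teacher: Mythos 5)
Your overall blueprint is the paper's: the normalization $r^{n+1}|J|<|J_{\mathbf i}|\le r^n|J|$ for ${\mathbf i}\in X_n$ is exactly how the paper handles the variable word lengths, the packing/counting lemma (Lemma \ref{OSClem}) plays the role you assign it, and your outline of part (ii) --- the map $\xi\mapsto\bigcap_n J_{{\mathbf u}_n}$, surjectivity from nested basic sets, injectivity from $d({\mathbf u}_n,{\mathbf v}_n)\le 1$, and condition (H) converting $J_{{\mathbf u}_{n+1}}\cap J_{{\mathbf v}_{n+1}}=\emptyset$ into the lower H\"older bound --- is essentially the paper's proof of Theorem \ref{thm. of moran sets}.

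The gap is in part (i). You reduce hyperbolicity to two facts, bounded degree and the parent-adjacency axiom ($({\mathbf i},{\mathbf j})\in{\mathcal E}_h$ implies the parents are equal or adjacent), and then assert that ``the general hyperbolicity theorem for augmented trees applies verbatim.'' No such theorem exists: Kaimanovich's augmented trees are not automatically hyperbolic, and the actual criterion (Theorem \ref{th2.3}(ii)) is that the lengths of \emph{horizontal geodesics} are uniformly bounded, which does not follow from bounded degree plus the axiom. For instance, take a binary tree whose $n$-th level is a horizontal path $w_1-\cdots-w_{2^n}$ in which the parent of $w_i$ is $v_i$ for $i\le 2^{n-1}$ and $v_{2^n+1-i}$ for $i>2^{n-1}$ (the path ``folds back'' onto the previous level); this satisfies the axiom and has degree at most $5$, yet for any fixed $j$ and all sufficiently large $n$ the segment $w_1-\cdots-w_j$ is a genuine geodesic of length $j-1$, so horizontal geodesics are unbounded and the graph is not hyperbolic. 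The substantive step you are missing is the contradiction argument of Proposition 3.2: given a horizontal geodesic $[{\mathbf u}_0,\dots,{\mathbf u}_{3k}]$ at level $n$, pass to the $k$-th ancestors ${\mathbf v}_i={\mathbf u}_i^{-k}$; the shortest horizontal path joining ${\mathbf v}_0$ to ${\mathbf v}_{3k}$ still has length $\ell\ge 3k-2k=k$ (otherwise one could shortcut the original geodesic through the ancestors), while all of these ancestors' basic sets meet $V=\bigcup_i J_{{\mathbf u}_i}$, whose diameter is at most $(3k+1)r^n|J|\le r^{n-k}|J|$ once $(3k+1)r^k\le 1$; choosing $k\ge c$ then contradicts Lemma \ref{OSClem} at level $n-k$. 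This is where the geometric input from Definition \ref{def1.1}(iii) actually enters, and it is not a formal consequence of the two properties you verify.
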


In \cite{LaLu12}, we investigated the Lipschitz equivalence  of
self-similar sets and self-affine sets by employing the structure of
the augmented tree and its hyperbolic boundary.  Based on the same
goal,  in this paper, we also establish the Lipschitz equivalence
relationship for a class of Moran sets.

\medskip

Let $J\subset {\mathbb R}^d$ be a connected compact set with $\text{int}(J)\ne \emptyset$
and let $r_k\equiv r\in (0,1)$ in Definition \ref{def1.1}. In this case, $X_n= D_n$ for $n\geq 0$.  We define
a special  Moran class ${\mathcal M}':={\mathcal M}(J, \{n_k\}, r)$
if in addition two more conditions hold:

(v) There exists $L\in {\mathbb N}$ such that for any $k\geq 1$ and a subset
$T \subset D_{k-1}$, if $\bigcup_{{\mathbf i\in T}}J_{\mathbf
i}$ is a connected component of $\bigcup_{{\mathbf i\in
D_{k-1}}}J_{\mathbf i}$ topologically,  then $\#T\leq L$.

(vi) For any $k\geq 1,\ T \subset D_{k-1}$ and ${\mathbf i}\in T$, if $J_{{\mathbf i}1},\dots, J_{{\mathbf i}n_k}$ are subsets of
$J_{\mathbf i}$ in the next step, then the union of all subsets of $\bigcup_{{\mathbf
i\in T}}J_{\mathbf i}$ can be written as $\#T (:=b)$ disjoint groups
as follows
$$\bigcup_{{\mathbf i}\in T}\bigcup_{j=1}^{n_k}J_{{\mathbf i}j}= \left({\bigcup}_{k\in \Lambda_1}C_{k}\right)\ \cup \ \cdots \cup \
\left({\bigcup}_{k\in \Lambda_{b}}C_{k}\right)$$   where $C_k$ are
connected components of $\bigcup_{{\mathbf i}\in
T}\bigcup_{j=1}^{n_k}J_{{\mathbf i}j}$ topologically such that every group
contains exactly $n_k$ terms of $J_{{\mathbf i}j}$.

\medskip

Since the Moran structure closely depends on the positions of the
basic sets in the constructing process, we can choose proper basic
sets in each step to satisfy conditions (v) and (vi). For example,
the fractal sets generated by a sequence of nested intervals or
squares (see \cite{Luo}, \cite{XX}).

\begin{theorem}\label{th1.3}
For any two Moran sets $E, F\in {\mathcal M}'$ defined as above. If
the condition in (ii) of Theorem \ref{th1.2} holds,  then $E$ and $F$ are
Lipschitz equivalent.
\end{theorem}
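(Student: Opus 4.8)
The plan is to prove Theorem 1.3 by building a bi-Lipschitz map between the hyperbolic boundaries of the two augmented trees and then transferring it to the Moran sets themselves via Theorem 1.2.

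The plan is to prove Theorem \ref{th1.3} by transporting everything to the hyperbolic boundaries and building a near-isometry between the two augmented trees. By Theorem \ref{th1.2} (with condition (H), i.e. the condition in part (ii), in force), each of $E$ and $F$ is identified with the hyperbolic boundary of its augmented tree. I first note that this identification can be upgraded from H\"older to bi-Lipschitz by choosing the visual parameter $a=r$: since $r_k\equiv r$, for two boundary points $\xi,\eta$ the Euclidean distance $|\pi_E(\xi)-\pi_E(\eta)|$ is comparable to $r^{\kappa_E(\xi,\eta)}$ while the visual metric $\rho_a(\xi,\eta)$ is comparable to $a^{\kappa_E(\xi,\eta)}$, where $\kappa_E(\xi,\eta)$ denotes the deepest level at which the truncations of $\xi,\eta$ still lie in a common connected component (cluster) of $E$'s construction. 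Taking $a=r$ forces the H\"older exponent $\alpha=\log r/\log a$ to equal $1$, so $E$ is bi-Lipschitz to $\partial X_E$, and likewise $F$ to $\partial X_F$, with the \emph{same} parameter. Hence it suffices to prove that $\partial X_E$ and $\partial X_F$ are bi-Lipschitz equivalent.

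For the boundary comparison I would invoke the standard principle from the theory of hyperbolic graphs (developed in Section 2): a \emph{near-isometry} $\Phi$, that is, a roughly surjective map with $|d_E(\Phi u,\Phi v)-d_F(u,v)|\le C$ for all vertices, changes Gromov products by at most a bounded additive amount, and therefore extends to a bi-Lipschitz homeomorphism $\partial\Phi$ of the boundaries whenever both carry the visual metric of the same parameter $a$. Thus the whole theorem reduces to constructing such a $\Phi$ between $(X,\mathcal E_E)$ and $(X,\mathcal E_F)$. The key simplification is that, because $E$ and $F$ share the data $(J,\{n_k\},r)$, both augmented trees have the identical vertex set $X=D$ and the identical set of vertical edges; they differ only in their horizontal edges, hence only in how the vertices at a given level are grouped into clusters.

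The construction of $\Phi$ is where conditions (v) and (vi) enter. I would build $\Phi$ as a level-preserving bijection of $D$ by induction on the level. Condition (vi) asserts that the children of any cluster $T$ of size $b$ partition into exactly $b$ groups, each containing precisely $n_k$ basic sets; since the number and sizes of these groups depend only on $b$ and $n_k$, the group-profile is the same for $E$ and $F$, so I can match $F$-groups to $E$-groups bijectively while respecting the parent (vertical) structure, and inside each group match the $n_k$ basic sets by any bijection. Condition (v) then guarantees that every connected component has at most $L$ elements, so horizontal geodesics within a cluster have length at most $L-1$; the way a group of size $n_k$ splits into components of size $\le L$ may differ between $E$ and $F$, but this discrepancy perturbs horizontal distances by at most a constant depending only on $L$. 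Feeding the augmented-tree formula $d(u,v)\asymp |u|+|v|-2\kappa(u,v)$ into this, I expect the two-sided bound $|d_E(\Phi u,\Phi v)-d_F(u,v)|\le C(L)$, so that $\Phi$ is a near-isometry. Chaining the three bi-Lipschitz maps $E\to\partial X_E$, $\partial X_E\to\partial X_F$ (via $\partial\Phi$), and $\partial X_F\to F$ then yields that $E$ and $F$ are Lipschitz equivalent.

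The main obstacle I anticipate is precisely the coherent construction and verification of $\Phi$: the group-matching supplied by (vi) is local to each level, and one must show that these local matchings glue along the tree into a single level-preserving bijection that simultaneously respects every parent--child relation and controls horizontal distortion at all scales. Concretely, the work lies in the two-sided additive bound comparing $d_E(\Phi u,\Phi v)$ with $d_F(u,v)$ --- equivalently, in proving $|\kappa_E(\Phi\xi,\Phi\eta)-\kappa_F(\xi,\eta)|\le C$ for all boundary points --- since a horizontal connection present in $E$ but absent in $F$ (or vice versa) must be shown to be compensated within bounded distance using the size bound $L$ and the group structure, rather than being allowed to propagate across arbitrarily many levels.
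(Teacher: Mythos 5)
Your overall architecture matches the paper's: identify $E$ and $F$ with the hyperbolic boundaries of their augmented trees via Theorem \ref{thm. of moran sets}, produce a bi-Lipschitz map between the two boundaries from a near-isometry of graphs (Proposition \ref{th4.2}), and compose. But the heart of the proof --- the actual construction of the near-isometry --- is exactly the part you leave as an ``anticipated obstacle,'' and your proposed route to it does not obviously work. You want a direct level-preserving bijection between $(X,\mathcal E_E)$ and $(X,\mathcal E_F)$ matching clusters to clusters, but the horizontal component structures of $E$ and $F$ at a given level need not correspond at all: $E$ may have components of sizes $\{3,1,1\}$ where $F$ has $\{2,2,1\}$, so there is no cluster-to-cluster bijection to ``respect the parent structure'' with, and your hoped-for bound $|d_E(\Phi u,\Phi v)-d_F(u,v)|\le C(L)$ requires precisely the argument you have not given. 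The paper's key idea, which you are missing, is to not compare the two augmented trees directly: rearrangeability (conditions (v)--(vi), formalized in Definition \ref{def4.3}) is used in Theorem \ref{th4.4} to build a near-isometry from \emph{each} augmented tree to the plain vertical tree $(X,\mathcal E_v)$ --- by inductively reassigning the offspring components of each horizontal cluster $T$, in groups of size $\#\Sigma$, to the descendants of single vertices, so that every component ends up mapped to a set of siblings. Since $E,F\in\mathcal M'$ share the data $(J,\{n_k\},r)$, the two vertical trees coincide, and the desired boundary equivalence follows by composition. This factorization is what makes the mismatch of cluster profiles irrelevant.

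A secondary issue: your ``upgrade'' of the boundary identification to bi-Lipschitz by choosing the visual parameter so that $\alpha=1$ is both unnecessary and potentially illegitimate. The parameter $a$ is constrained by the hyperbolicity constant ($e^{\delta a}-1<\sqrt2-1$), so you cannot in general take $a=-\log r$. The paper avoids this entirely: the two-sided H\"older estimate (\ref{eq.holder homeo.}) holds with the \emph{same} exponent $\alpha$ for both $\Phi_1:\partial X\to E$ and $\Phi_2:\partial Y\to F$, so in the composition $\tau=\Phi_2\circ\varphi\circ\Phi_1^{-1}$ the factor $\rho_a(\cdot,\cdot)^\alpha$ cancels against itself and $\tau$ is bi-Lipschitz even though neither $\Phi_i$ is. You should adopt that bookkeeping and drop the renormalization of $a$.
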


Actually we prove a less restrictive form of Theorem \ref{th1.3} (Theorem \ref{th4.6}) in terms of the rearrangeable augmented tree.

\medskip

The rest of the paper is organized as follows. In Section 2, we recall some
well-known results about hyperbolic graphs. In Section 3, we
identify Moran sets with hyperbolic boundaries and prove Theorem \ref{th1.2}.  We introduce a concept of `rearrangeable augmented tree' to
prove Theorems \ref{th1.3} in Section 4.

\end{section}

\bigskip

\begin{section}{\bf Hyperbolic graphs}

Let $X$ be a countably infinite set, we say that $X$ is a {\it
graph} if it is associated with a symmetric subset  ${\mathcal E}$
of $(X \times X) \setminus \{(x,x) : \ x \in X\}$, and call  $x \in
X$ a {\it vertex}, $(x, y) \in {\mathcal{E}}$ an {\it edge}, which
is more conveniently denoted by $x \sim y$ (intuitively, $x, y$ are
neighborhood to each other).  By a {\it path} in $X$ from $x$ to
$y$, we mean a finite sequence $x = x_0, x_1, \dots , x_n=y$ such
that $x \sim x_{i+1}, i =0, \dots, n-1$. We always assume that the
graph $X$ is connected, i.e., there is a path joining any two
vertices $x, y \in X$. We call $X$ a {\it tree} if the path between
any two points is unique. We equip a graph $X$ with an
integer-valued metric $d(x,y)$, which is the minimum among the
lengths of the paths from $x$ to $y$, and denote the corresponding
geodesic path by $\pi(x,y)$. We also use $|\pi(x,y)|$ to be the
length of the geodesic, which equals $d(x,y)$. Let $ o \in X$ be a
fixed point in $X$ and call it the {\it root} of the graph. We use
$|x|$ to denote $d(o,x),$ and say that $x$ belongs to $n$-th level
if $d(o,x)=n$.

\medskip

A graph $X$ is called {\it hyperbolic} (with respect to $o$) if
there is $\delta >0$ such that
$$
 |x \wedge y| \geq \min\{|x \wedge z|,  |z\wedge y|\}-\delta \qquad \forall \ \  x,y,z \in
 X,$$
where $ |x \wedge y| :=\frac{1}{2}(|x|+|y|-d(x,y))$ is the {\em
Gromov product} (\cite{Gr87}, \cite{Wo00}). For a fixed $a>0$ with
$a'=\exp(\delta a)-1<\sqrt{2}-1$, we define a {\it hyperbolic metric}
${\rho_a}(\cdot,\cdot)$ on $X$ by
\begin{equation}\label{eq2.1}
{\rho_a}(x,y)=\left\{
\begin{array}{ll}
\exp(-a |x \wedge y| ) & \quad \textrm{if \ $x\ne y$},\\
0  & \quad \textrm{otherwise}.
\end{array} \right.
\end{equation}

Since the hyperbolic metric $\rho_a$ is equivalent to a metric of $X$
with the same topology as long as $a'<\sqrt{2}-1$ \cite{Wo00}, we
always take $\rho_a$ as a metric for simplicity.  Under this metric
we then can complete the space $X$ and denote by the completion
$\hat{X}$. We call $\partial X := \hat{X}\setminus
 X$ the {\em hyperbolic boundary } of $X$. The metric $\rho_a$ can be extended onto $\partial X$, and under which
$\partial X$ is a compact set.  It is often useful to identify $\xi
\in \partial X$ with the {\it geodesic rays} in $X$ that converge to
$\xi$, i.e., an infinite path $\pi[x_1,x_2,\dots]$ such that
$x_i\sim x_{i+1}$ and any finite segment of the path is a geodesic.
It is known that two geodesic rays $\pi[x_1,x_2,\dots],
\pi[y_1,y_2,\dots]$ represent the same $\xi \in \partial X$ if and
only if $ |x_n\wedge y_n|\rightarrow\infty$ as $n\rightarrow\infty.$

\medskip

Our interest is on the following tree structure introduced by
Kamainovich which is used to study the self-similar sets (\cite
{Ka03}, {\cite {LaWa09}}).  For a tree $X$ with a root $o$, we  use
${\mathcal E}_v$ to denote the set of edges ($v$ for vertical). We introduce  additional edges on  each level $\{ x : \ d(0, x)
=n\}, \ n \in {\Bbb N}$ as follows. Let $x^{-k}$ denote the $k$-th ancestor of
$x$, the unique point in $(n-k)$-th level that is joined by a unique
path.
\medskip

\begin{Def}(\cite{Ka03}) \label{de2.2}
Let $X$ be a tree with a root $o$. Let \  ${\mathcal E}_h \subset (X
\times X) \setminus \{(x,x): \ x\in X\}$ such that it is symmetric
and satisfies :
$$
(x,y)\in {\mathcal E}_h \quad \Rightarrow \quad |x|=|y| \ \ \text{and}\ \
\text{either}~ x^{-1}=y^{-1} ~\text{or}~ (x^{-1},y^{-1})\in
{\mathcal E}_h.
$$
We call elements in  ${\mathcal E}_h$  horizontal edges,  and for
${\mathcal E}={\mathcal E}_v\cup{\mathcal E}_h$, $(X,{\mathcal E})$
is called an augmented tree.
\end{Def}

Following from {\cite {LaWa09}},  we say that a  path $\pi(x,y)$ is
a {\it horizontal geodesic} if it is a geodesic and  consisting of
horizontal edges only. A path is called a {\it canonical geodesic}  if
there exist $u,v\in \pi(x,y)$ such that:

\medskip

(i)\  $\pi(x,y)=\pi(x,u)\cup\pi(u,v)\cup\pi(v,y)$ with $\pi(u,v)$ a
horizontal path and $\pi(x,u),\\ \pi(v,y)$ vertical paths;

(ii)\ for any geodesic path $\pi^{\prime}(x,y)$,
$\text{dist}(o,\pi(x,y))\leq \text{dist}(o,\pi^{\prime}(x,y))$.

\medskip

Note that condition $(ii)$ is to require the horizontal part of the
canonical geodesic to be on the highest level.  The following
theorem is due to Lau and Wang \cite{LaWa09}.

\medskip

\begin{theorem}{\label{th2.3}} Let $X$ be an augmented tree. Then

\medskip

(i)  Let $\pi(x,y)$ be a canonical geodesic, then $ |x\wedge y| =
l-h/2$, where $l$ and $h$ are respectively the level and length of
the horizontal part of the geodesic.

\medskip

(ii)  $X$ is hyperbolic if and only if the lengths of horizontal
geodesics are uniformly bounded.
\end{theorem}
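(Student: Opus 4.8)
The plan is to read (i) as a direct computation and to reduce the two implications in (ii) to a single statement about the \emph{crossing level} of a canonical geodesic. Throughout I would first record the structural fact (implicit in the definitions above) that a canonical geodesic between any two vertices exists and has the form asserted: using the edge condition of Definition \ref{de2.2}, any geodesic can be reconfigured into an up--horizontal--down path with the horizontal crossing pushed to the highest admissible level. For (i), write $\pi(x,y)=\pi(x,u)\cup\pi(u,v)\cup\pi(v,y)$ with $\pi(u,v)$ horizontal at level $l$ of length $h$. Since a vertical geodesic segment is monotone and $l\le\min\{|x|,|y|\}$, the points $u$ and $v$ are the ancestors of $x$ and $y$ at level $l$, so $\pi(x,u)$ and $\pi(v,y)$ have lengths $|x|-l$ and $|y|-l$. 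Hence $d(x,y)=(|x|-l)+h+(|y|-l)$, and substituting into $|x\wedge y|=\tfrac12(|x|+|y|-d(x,y))$ gives $|x\wedge y|=l-h/2$. That is the whole of (i).

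For the ``only if'' part of (ii) I would argue by contraposition with a single midpoint. Given a horizontal geodesic $x=w_0\sim w_1\sim\cdots\sim w_h=y$ at level $l$, set $z=w_{\lfloor h/2\rfloor}$. As subpaths of a geodesic are geodesics, $d(x,z)=\lfloor h/2\rfloor$ and $d(z,y)=\lceil h/2\rceil$, and $|z|=l$; part (i) (with $h=0$) gives $|x\wedge y|=l-h/2$ while both $|x\wedge z|$ and $|z\wedge y|$ are at least $l-\lceil h/2\rceil/2$. Feeding the triple $(x,z,y)$ into the Gromov inequality $|x\wedge y|\ge\min\{|x\wedge z|,|z\wedge y|\}-\delta$ then forces $h\le 4\delta+1$; thus a hyperbolicity constant $\delta$ caps the length of every horizontal geodesic.

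The substance is the ``if'' direction: assuming every horizontal geodesic has length at most $M$, I must exhibit a $\delta$. For the canonical geodesic between $x$ and $y$ let $l_{xy}$ and $h_{xy}$ be the level and length of its horizontal part; that part is a subpath of a geodesic made of horizontal edges, hence a horizontal geodesic, so $h_{xy}\le M$, and part (i) gives $|x\wedge y|\le l_{xy}\le |x\wedge y|+M/2$. It therefore suffices to prove the approximate ultrametric inequality $l_{xy}\ge\min\{l_{xz},l_{zy}\}-M$, since combining the two bounds yields $|x\wedge y|\ge\min\{|x\wedge z|,|z\wedge y|\}-3M/2$, i.e. hyperbolicity with $\delta=3M/2$. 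To prove the inequality, put $m=\min\{l_{xz},l_{zy}\}$ (note $m\le l_{xz}\le|x|$ and $m\le l_{zy}\le|y|$, so the level-$m$ ancestors exist). The key input is that, by the edge condition of Definition \ref{de2.2}, a horizontal path at a given level projects, on passing to ancestors, to a horizontal path of no greater length one level nearer the root. Projecting the two canonical horizontal parts (each of length $\le M$) down to level $m$ joins the level-$m$ ancestors of $x$ and $z$, and of $z$ and $y$, by horizontal paths of length $\le M$; concatenating through $z$'s ancestor joins the level-$m$ ancestors of $x$ and $y$ by a horizontal path of length $P_m\le 2M$. Going from $x$ up to that ancestor, across, and down to $y$ is a genuine path, so
\[ (|x|-m)+P_m+(|y|-m)\ \ge\ d(x,y)=(|x|-l_{xy})+h_{xy}+(|y|-l_{xy}), \]
which rearranges to $P_m\ge 2(m-l_{xy})$, whence $m-l_{xy}\le M$.

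I expect this last lemma to be the only delicate point. The naive route---taking the \emph{highest} level at which the ancestors of $x$ and $y$ are joined by \emph{some} horizontal path---fails, because such a connecting path can be arbitrarily long even when all horizontal geodesics are short (the geodesic between its endpoints may prefer to climb toward the root), so the Gromov product need not track that level and the gap $\tilde l_{xy}-l_{xy}$ need not be controlled. The fix is to never invoke arbitrary connections: one bounds the relevant crossing $P_m$ by projecting the two \emph{bounded} canonical horizontal segments and concatenating them, which pins $P_m\le 2M$ regardless of how the geodesic from $x$ to $y$ actually behaves. The remaining steps---monotonicity of horizontal length under projection, existence of level-$m$ ancestors, and the bookkeeping of constants---are routine once this bound is in hand.
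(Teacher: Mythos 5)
Your proof is correct, but note that the paper itself does not prove Theorem \ref{th2.3}: it is quoted verbatim from Lau and Wang \cite{LaWa09}, so there is no in-paper argument to compare against. What you have written is essentially a faithful reconstruction of the argument in that reference: the computation $|x\wedge y|=l-h/2$ from the up--across--down decomposition, the midpoint trick for the ``only if'' direction, and, for the substantive ``if'' direction, the projection of the two bounded canonical horizontal segments down to the level $m=\min\{l_{xz},l_{zy}\}$ (using the ancestor condition of Definition \ref{de2.2}, which guarantees that passing to parents sends a horizontal path to a horizontal path of no greater length) to produce a competitor path of length $(|x|-m)+P_m+(|y|-m)$ with $P_m\leq 2M$, whence $m-l_{xy}\leq M$ and $\delta=3M/2$ works. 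Your remark about why one must project the \emph{canonical} horizontal parts rather than invoke an arbitrary horizontal connection at the top level is exactly the right caution. The only point you assert rather than prove is the existence of canonical geodesics (that any geodesic can be rearranged into up--horizontal--down form with the crossing at the highest admissible level); this is a genuine lemma, proved in \cite{LaWa09} by the exchange argument you sketch, and since the paper also imports it silently via its definition of canonical geodesic, flagging it as you do is acceptable. A minor presentational quibble: in the ``only if'' step the appeal to part (i) is unnecessary and slightly garbled --- for vertices on a common horizontal geodesic the Gromov product can be computed directly from $|x|=|y|=l$ and $d(x,y)=h$.
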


\medskip

 The major application of the augmented trees is  to identify their boundaries with the self-similar sets.
 We assume a self-similar set $K$ is generated by an iterated function system (IFS) $\{S_j\}_{j=1}^m$ on ${\Bbb
 R}^d$.  Let $X=\bigcup_{n=0}^{\infty}
\{1, \dots, m\}^n$ be the symbolic space representing the IFS (by
convention, $\{1, \dots, m\}^0 = \emptyset$, and we still denote it
by $o$ ). For ${\bf u} = i_1\cdots i_k$, we denote by $S_{\bf
u}=S_{i_1} \circ \cdots \circ S_{i_k}$. Let ${\mathcal E}_v$ be  the
set of vertical edges corresponding to the nature tree structure on
$X$ with $\emptyset$ as a root. We define the horizontal edges on
$X$ by
\begin{equation*}
{\mathcal E}_h =\left\{({\mathbf u},{\mathbf v}): |{\mathbf u}| =
|{\mathbf v}|,  {\mathbf u} \ne {\mathbf v}\ \text{and} \
K_{\mathbf u}\cap K_{\mathbf v} \ne \emptyset \right\},
\end{equation*}
where $K_{\mathbf u} = S_{\mathbf u}(K)$. Let ${\mathcal
E}={\mathcal E}_v \cup {\mathcal E}_h$, then $(X,{\mathcal E})$ is
an augmented tree induced by the self-similar set.

\medskip
It was proved in \cite{LaWa09} that under the {\em open set
condition (OSC)}, the augmented tree is hyperbolic and its
hyperbolic boundary can be identified with the self-similar set.
More generally, Wang  \cite {Wa12} extended these results to the
{\em weak separation condition (WSC)}, the definition was introduced
by Lau and Ngai \cite{LaNg99} to study the  multifractal structure
for an IFS with overlaps.

\medskip

In \cite{LaLu12}, we have discussed that the choice of the
horizontal edges for the augmented tree is quite flexible.
Sometimes, we prefer to use another setting by replacing $K$ with a
bounded closed invariant set $J$ (i.e. $S_i(J)\subset J$ for each
$i$), namely
\begin{equation*}
{\mathcal E}_h =\left\{({\mathbf u},{\mathbf v}): |{\mathbf u}| =
|{\mathbf v}|,  {\mathbf u} \ne {\mathbf v}\ \text{and} \ J_{\mathbf
u}\cap J_{\mathbf v} \ne \emptyset \right\},
\end{equation*}  where $J_{\bf u}:= S_{\bf u}(J)$.
We can take $J=K$ as before or in many situations, take
$J=\overline{U}$ for the $U$ in the OSC. The hyperbolicity and
identity still hold by adopting the same proof.

\medskip

We remark that the augmented tree $(X, \mathcal E)$ depends on the
choice of the bounded invariant set $J$.  But under the OSC or WSC,
the hyperbolic boundary is the same as they can be identified with
the underlying self-similar set. In the following section, we will
discuss the augmented tree and its hyperbolic structure induced by a
Moran set.

\end{section}

\bigskip

\begin{section}{\bf Moran sets as hyperbolic boundaries}

Recall that ${\mathcal M}:= {\mathcal M}(J, \{n_k\}, \{r_k\})$
denotes the class of the Moran sets satisfying (i)-(iv) of
Definition \ref{def1.1} and $r :=\inf_k r_k>0$.

\begin{Lem}\label{OSClem}
Suppose $E\in {\mathcal M}$ is a Moran set. Then for any $b>0$,
there exists a constant $c>0$ (depending on $b$) such that for any
$n$ and a set $V\subset {\mathbb R}^d$ with diameter $|V|\leq br^n$,
we have
$$\#\{{\mathbf i}\in X_n: J_{\mathbf i}\cap V\ne \emptyset\}\leq c.$$
\end{Lem}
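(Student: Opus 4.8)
The plan is to run a volume-packing argument, exploiting that the words in $X_n$ index basic sets of comparable size with pairwise disjoint interiors. First I would record two structural facts about $X_n$. By condition (iv) the ratio of the similarity $S_{\mathbf i}$ for $\mathbf i = i_1\cdots i_k$ is $\rho_{\mathbf i} = r_1\cdots r_k$, so that $|J_{\mathbf i}| = \rho_{\mathbf i}|J|$ and $\mathcal L^d(J_{\mathbf i}) = \rho_{\mathbf i}^d\,\mathcal L^d(J)$, where $\mathcal L^d$ is Lebesgue measure. The defining inequality $r_1\cdots r_k \le r^n < r_1\cdots r_{k-1}$ in \eqref{add.equ}, together with $r_k \ge r = \inf_k r_k$, then yields the two-sided estimate $r^{n+1} < \rho_{\mathbf i} \le r^n$ for every $\mathbf i \in X_n$; in particular $|J_{\mathbf i}| \le |J|r^n$.

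Next I would observe that $X_n$ is an antichain for the prefix order: if $\mathbf i, \mathbf j \in X_n$ and $\mathbf i$ were a proper prefix of $\mathbf j$, with lengths $k < k'$, then $r_1\cdots r_k \ge r_1\cdots r_{k'-1} > r^n$, contradicting $r_1\cdots r_k \le r^n$. Consequently any two distinct $\mathbf i, \mathbf j \in X_n$ branch at their longest common prefix $\mathbf w$, so $J_{\mathbf i}$ and $J_{\mathbf j}$ are contained in two distinct children $J_{\mathbf{w}a}, J_{\mathbf{w}b}$ of $J_{\mathbf w}$; by condition (iii) these children have disjoint interiors, and since $\text{int}$ is monotone under inclusion this forces $\text{int}(J_{\mathbf i}) \cap \text{int}(J_{\mathbf j}) = \emptyset$. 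Thus the open sets $\{\text{int}(J_{\mathbf i})\}_{\mathbf i \in X_n}$ are pairwise disjoint.

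With these facts in hand the counting is routine. Fix $V$ with $|V| \le br^n$ and a point $x_0 \in V$, and set $\mathcal I = \{\mathbf i \in X_n : J_{\mathbf i} \cap V \ne \emptyset\}$. If $\mathbf i \in \mathcal I$ then every point of $J_{\mathbf i}$ lies within $|J_{\mathbf i}| + |V| \le (|J| + b)r^n =: R$ of $x_0$, so $J_{\mathbf i} \subset B(x_0, R)$. Since the sets $\text{int}(J_{\mathbf i})$ for $\mathbf i\in\mathcal I$ are pairwise disjoint and all contained in $B(x_0,R)$, comparing Lebesgue measures and writing $\lambda_0 = \mathcal L^d(\text{int}(J)) > 0$ gives $\#\mathcal I \cdot r^{(n+1)d}\lambda_0 < \sum_{\mathbf i \in \mathcal I} \rho_{\mathbf i}^d \lambda_0 \le \omega_d R^d$, where $\omega_d = \mathcal L^d(B(0,1))$. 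Cancelling the common factor $r^{nd}$ yields $\#\mathcal I < \omega_d(|J|+b)^d/(r^d\lambda_0)$, a bound independent of $n$ and of $V$, so one may take $c = \omega_d(|J|+b)^d/(r^d\lambda_0)$.

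The only point requiring care — and the reason I pass to interiors rather than to the closed basic sets — is that $\partial J$ need not have Lebesgue measure zero, so one cannot simply add $\mathcal L^d(J_{\mathbf i})$ over the family. Working instead with the genuinely disjoint open sets $\text{int}(J_{\mathbf i})$, whose measures are exactly $\rho_{\mathbf i}^d\lambda_0$ with $\lambda_0 > 0$ because $\text{int}(J) \ne \emptyset$, removes this difficulty. The comparability $r^{n+1} < \rho_{\mathbf i} \le r^n$, which is precisely what the grouping defining $X_n$ is designed to provide, is what makes the resulting volume bound uniform in $n$.
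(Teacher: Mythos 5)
Your proof is correct and follows essentially the same route as the paper, which simply cites Falconer's geometric Lemma 9.2: that lemma is itself proved by exactly the volume-packing comparison you carry out. You have merely unpacked the citation, and in doing so you supply the two Moran-specific facts the paper leaves implicit --- that $X_n$ is an antichain so the interiors $\text{int}(J_{\mathbf i})$, $\mathbf i\in X_n$, are pairwise disjoint, and that $r^{n+1}<\rho_{\mathbf i}\leq r^n$ on $X_n$ --- which is a worthwhile service to the reader.
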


\begin{proof}
The proof is the same as the case for a self-similar set satisfying
OSC by applying a geometrical lemma (see Lemma 9.2 of Falconer
\cite{Fa90}).
\end{proof}

We now construct a graph on $X=\bigcup_{n\geq 0} X_n$.  For each
word ${\mathbf u}\in X_n$, there exists a unique word ${\mathbf
v}\in X_{n-1}$ and a finite word ${\mathbf v'}$ such that ${\mathbf
u}=\mathbf{vv'}$. We denote the unique ${\mathbf v}$ by ${\mathbf
u}^{-1}$ and call it the first ancestor of ${\mathbf u}$,
inductively ${\mathbf u}^{-k}= \big({\mathbf u}^{-(k-1)}\big)^{-1}$.
It follows that ${\mathbf u}, {\mathbf u}^{-1},\dots, {\mathbf
u}^{-k}$ is a path from ${\mathbf u}^{-k}$ to ${\mathbf u}$. The
natural tree structure on $X$ is to take $\emptyset$ as the root $o$
and to define the set of vertical edges in this way
$${\mathcal E}_v=\big\{({\mathbf u}^{-1}, {\mathbf u}): {\mathbf
u}\in X\setminus \{\emptyset\}\big\}.$$  The horizontal edge set is
defined by
\begin{equation*}
{\mathcal E}_h =\left\{({\mathbf u},{\mathbf v}): \exists \ n>0 \
\text{such that} \  {\mathbf u}\ne {\mathbf v}\in X_n \ \text{and} \
J_{\mathbf u}\cap J_{\mathbf v} \ne \emptyset \right\}.
\end{equation*}
Let ${\mathcal E}={\mathcal E}_v\cup {\mathcal E}_h$.  With the
analogous argument in the proof of Theorem 3.2 of \cite{LaWa09}, it
concludes that

\begin{Prop}
Suppose $E\in {\mathcal M}$ is a Moran set. Then the induced
augmented tree $(X,{\mathcal E})$ is a hyperbolic graph.
\end{Prop}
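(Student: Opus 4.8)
The plan is to invoke Theorem \ref{th2.3}(ii): since $(X,{\mathcal E})$ genuinely is an augmented tree in the sense of Definition \ref{de2.2} (if $J_{\mathbf u}\cap J_{\mathbf v}\neq\emptyset$ with ${\mathbf u},{\mathbf v}\in X_n$, then $J_{{\mathbf u}^{-1}}\cap J_{{\mathbf v}^{-1}}\supseteq J_{\mathbf u}\cap J_{\mathbf v}\neq\emptyset$ because of the nesting $J_{\mathbf u}\subseteq J_{{\mathbf u}^{-1}}$, so intersecting cells have equal or horizontally adjacent parents), hyperbolicity will follow once I bound the lengths of horizontal geodesics uniformly. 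The first ingredient is that all cells on a common level are comparable in size: from condition (iv) of Definition \ref{def1.1} one has $|J_{\mathbf u}|=r_1\cdots r_k\,|J|$ for ${\mathbf u}=i_1\cdots i_k$, and the defining inequality of $X_n$ together with $r_k\geq r$ yields
\[
|J|\,r^{\,n+1}<|J_{\mathbf u}|\leq |J|\,r^{\,n}\qquad({\mathbf u}\in X_n).
\]
In particular every level-$n$ cell has diameter at most $|J|\,r^{\,n}$.

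Now let $\pi=[x_0,x_1,\dots,x_L]$ be a horizontal geodesic lying in $X_n$. Consecutive cells meet, so $V:=\bigcup_{i=0}^{L}J_{x_i}$ is connected with $|V|\leq (L+1)|J|\,r^{\,n}$. The idea is to build a short detour through a sufficiently high ancestor level and then use that a geodesic admits no shortcut. Choose the smallest integer $m\geq 0$ with $(L+1)|J|\,r^{\,n}\leq r^{\,n-m}$, equivalently $m\approx\log_{1/r}\big((L+1)|J|\big)$ (the degenerate case $m>n$ is handled below). Passing to the $m$-th ancestors, the nesting $J_{x_i}\subseteq J_{x_i^{-m}}$ gives $J_{x_i^{-m}}\cap V\neq\emptyset$ for every $i$, while iterating the implication of Definition \ref{de2.2} shows that, after deleting consecutive repetitions, $x_0^{-m},\dots,x_L^{-m}$ is a horizontal path in $X_{n-m}$. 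Since $|V|\leq r^{\,n-m}$, Lemma \ref{OSClem} (applied at level $n-m$ with $b=1$) bounds the number of \emph{distinct} such ancestors by a constant $c$ independent of $n$ and $L$.

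Consequently the concatenation ``ascend $m$ vertical edges from $x_0$, traverse the distinct ancestors horizontally, descend $m$ vertical edges to $x_L$'' is a path of length at most $2m+(c-1)$. Because $\pi$ is a geodesic, $L=d(x_0,x_L)\leq 2m+c-1$, whence
\[
L\leq 2\log_{1/r}\big((L+1)|J|\big)+c+1 .
\]
As the left-hand side grows linearly and the right-hand side only logarithmically in $L$, this forces $L\leq L_0$ for an absolute constant $L_0=L_0(r,|J|,c)$. In the leftover regime $m>n$ one simply ascends all the way to the root, which already gives $L\leq 2n$; but $m>n$ combined with $L\leq 2n$ yields $r^{-(n-1)}\lesssim n$, which can hold only for finitely many $n$, so $L$ is bounded there too. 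In every case horizontal geodesics have uniformly bounded length, and Theorem \ref{th2.3}(ii) delivers the hyperbolicity of $(X,{\mathcal E})$.

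I expect the main obstacle to be the calibration of the ascent height $m$: it must be large enough that the whole chain contracts to the scale of a single cell at level $n-m$, so that Lemma \ref{OSClem} caps the number of ancestors by an absolute constant, yet small enough that the vertical cost $2m$ remains logarithmic in $L$. The ensuing mismatch between the linear growth of $L$ and the logarithmic growth of the detour is precisely what bounds $L$; the bookkeeping for the near-root regime $m>n$ is a minor but necessary technicality.
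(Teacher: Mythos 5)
Your proof is correct and follows essentially the same route as the paper: both arguments hinge on Lemma \ref{OSClem} applied to the ancestors of a horizontal geodesic at a depth where the union of its cells shrinks to the scale of a single ancestor cell, combined with the no-shortcut property of geodesics, and then invoke Theorem \ref{th2.3}(ii); the paper runs this as a contradiction with ancestor depth $k=L/3$ while you run it directly with $m\sim\log_{1/r}L$, which is only a cosmetic difference. The paper additionally verifies local finiteness of the graph (again via Lemma \ref{OSClem}), but that is not required by Theorem \ref{th2.3}(ii) as stated, so its omission is not a gap.
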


\begin{proof}
First we show the graph $X$ is locally finite. By Lemma
\ref{OSClem}, there exists $c>0$ such that for any $n>0$ and ${\bf
v}\in X_n$,
$$\#\{{\bf u}\in X_n: J_{\bf u}\cap J_{\bf v}\ne \emptyset\}\leq
c.$$ Therefore ${\bf v}$ has at most $c$ neighbors in the same
level, also it has one ancestor. On the other hand, let ${\bf u}$ be
a descendant of ${\bf v}$, i.e., ${\bf u}^{-1}={\bf v}$, then
$J_{\bf u}\subset J_{\bf v}$, hence $J_{\bf u}\cap J_{\bf
v}\ne\emptyset$. Let $V=J_{\bf v}$, by using Lemma \ref{OSClem}
again, we obtain
$$\#\{{\bf u}\in X: {\bf u}^{-1}={\bf v}\}\leq c'.$$
Hence for ${\bf v}\in X$, $$\deg({\bf v})=\#\{({\bf u},{\bf v})\in
{\mathcal E}: {\bf u}\in X\}\leq c+c'+1,$$ and $X$ is locally
finite.

Next we show that the lengths of the horizontal geodesics are
uniformly bounded, then Theorem \ref{th2.3} implies that
$(X,{\mathcal E})$ is  hyperbolic. Suppose otherwise, for any
integer $k>0$, there exists a horizontal geodesic $\pi({\bf
u}_0,{\bf u}_{3k})=[{\bf u}_0,{\bf u}_1,\dots,{\bf u}_{3k}]$ with
${\bf u}_i\in X_n$. We consider the $k$-th ancestors ${\bf v}_i={\bf
u}_i^{-k}$ and the path $[{\bf v}_0,{\bf v}_1,\dots,{\bf
v}_{3k}]=[{\bf u}_0^{-k},{\bf u}_1^{-k},\dots,{\bf u}_{3k}^{-k}]$.
Let $$p({\bf v}_0,{\bf v}_{3k})=[{\bf v}_{i_0},{\bf
v}_{i_1},\dots,{\bf v}_{i_{\ell}}],\quad {\bf v}_{i_j}\in \{{\bf
v}_0,\dots, {\bf v}_{3k}\},$$ be the shortest horizontal path
connecting ${\bf v}_0$ and ${\bf v}_{3k}$. By the geodesic property
of $\pi({\bf u}_0,{\bf u}_{3k})$, it is clear that $$\ell=|p({\bf
v}_0,{\bf v}_{3k})|\geq |\pi({\bf u}_0,{\bf u}_{3k})|-2k=k.$$ Now
choose $k\geq c$ such that $(3k+1)r^k\leq 1$, where $c$ is as in
Lemma \ref{OSClem}. Let
$$V={\bigcup}_{i=0}^{3k}J_{{\bf u}_i}.$$
From $|J_{{\bf u}_i}|\leq r^n |J|, i=0,1,\dots,3k$, it is
straightforward to show that
$$|V|\leq (3k+1)r^n|J|\leq r^{n-k}|J|.$$
Note that $J_{{\bf u}_i}\subset J_{{\bf v}_i}$, we see that $J_{{\bf
v}_{i_j}}\cap V \ne \emptyset$ for each $j=0,1,\dots, \ell$. It
follows that $$\#\{{\bf v}\in X_{n-k}: J_{\bf v}\cap V\ne
\emptyset\}\geq \ell+1> k\geq c.$$ That contradicts Lemma
\ref{OSClem} and the proof is complete.
\end{proof}

\medskip

Remember that the hyperbolic boundary $\partial X$ is a compact set
under the metric $\rho_a$ and any element $\xi\in \partial X$ is
often  identified as an equivalence class of geodesic rays
$\pi[{\mathbf u}_1,{\mathbf u}_2,\ldots]$ in $X$. Now we give the
main theorem of this section which extends the results on
self-similar sets of Lau and Wang \cite{LaWa09} to that on Moran
sets.  First  a condition (H) is needed.

\medskip

\noindent {\em Condition (H):}\quad  there is a positive constant
$C'$ such that for any $n\geq 1$ and $\mathbf {u, v}\in X_n$, either
$J_{\mathbf u}\cap J_{\mathbf v}\ne \emptyset$ or
$\text{dist}(J_{\mathbf u}, J_{\mathbf v})\geq C'r^n$.

\medskip

There are many standard self-similar sets satisfying condition (H),
for example, the generating IFS has the OSC and all the parameters
of the similitudes are integers. However there are also examples
that condition (H) is not satisfied (see \cite{LaWa09},
\cite{Wa12}).

\begin{theorem} \label{thm. of moran sets}
Let $E\in {\mathcal M}$ be a Moran set, $(X,{\mathcal E})$ be the
induced augmented tree. Then  there exists  a bijection $\Phi:
\partial X \to E $ satisfying:
$$
|\Phi(\xi)-\Phi(\eta)|\leq C\rho_a(\xi,\eta)^{\alpha},\quad\text{for
any}~ \xi,\eta\in \partial X,
$$
where $\alpha=-\log r/a$. In this case $\partial X$ is homeomorphic
to $E$.

If the additional condition (H) holds,  then this $\Phi$ has the
following H\"older equivalent property:
\begin{equation}\label{eq.holder homeo.}
C^{-1}|\Phi(\xi)-\Phi(\eta)|\leq \rho_a(\xi,\eta)^{\alpha}\leq
C|\Phi(\xi)-\Phi(\eta)|.
\end{equation}
\end{theorem}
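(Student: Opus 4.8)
The plan is to construct the map $\Phi$ explicitly via geodesic rays and then verify the metric estimates. First I would establish the natural correspondence between the hyperbolic boundary $\partial X$ and the infinite word space $D_\infty$. Each $\xi \in \partial X$ is represented by a geodesic ray $\pi[\mathbf{u}_1, \mathbf{u}_2, \ldots]$; using the structure of canonical geodesics and Theorem \ref{th2.3}, I would argue that such a ray determines a nested sequence of basic sets $J_{\mathbf{u}_1} \supset J_{\mathbf{u}_2} \supset \cdots$ (up to the bounded horizontal perturbation controlled by hyperbolicity), whose intersection is a single point of $E$. Concretely, the key input is that the diameters satisfy $|J_{\mathbf{u}_n}| \leq r^n |J|$ by the definition of $X_n$, so the nested intersection $\bigcap_n J_{\mathbf{u}_n}$ shrinks to a point; I would define $\Phi(\xi)$ to be that point. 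Conversely, every point of $E$ lies in a nested sequence of basic sets, giving a ray, so $\Phi$ is surjective.

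Next I would prove the Hölder estimate $|\Phi(\xi) - \Phi(\eta)| \leq C \rho_a(\xi,\eta)^\alpha$ with $\alpha = -\log r / a$. Given $\xi \neq \eta$, take canonical geodesic rays representing them and let $\mathbf{u}, \mathbf{v} \in X_n$ be the vertices where the two rays diverge, so that $|\xi \wedge \eta| = |\mathbf{u} \wedge \mathbf{v}| \approx n$ by part (i) of Theorem \ref{th2.3} (with the horizontal level $l = n$ and bounded horizontal length $h$, which exists by hyperbolicity). Since $\Phi(\xi) \in J_{\mathbf{u}}$ and $\Phi(\eta) \in J_{\mathbf{v}}$ and these basic sets are connected through a bounded-length horizontal chain at level $n$, their union has diameter comparable to $r^n |J|$. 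Then $|\Phi(\xi) - \Phi(\eta)| \lesssim r^n = (e^{-an})^{-\log r/a} \asymp \rho_a(\xi,\eta)^\alpha$, using $\rho_a(\xi,\eta) = \exp(-a|\xi \wedge \eta|) \asymp e^{-an}$. The uniform bound on horizontal lengths (from hyperbolicity) is exactly what keeps the constant $C$ uniform, and injectivity follows because distinct boundary points diverge at some finite level and thus land in basic sets that are eventually disjoint. A bijective continuous map from the compact $\partial X$ to the Hausdorff $E$ is a homeomorphism.

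For the reverse estimate under condition (H), I would bound $\rho_a(\xi,\eta)^\alpha \leq C |\Phi(\xi) - \Phi(\eta)|$, equivalently showing that if $\Phi(\xi)$ and $\Phi(\eta)$ are close then $\xi, \eta$ agree to a high level. Suppose the two rays diverge at level $n$, landing in basic sets $J_{\mathbf{u}}, J_{\mathbf{v}}$ with $(\mathbf{u}, \mathbf{v}) \notin \mathcal{E}_h$, i.e. $J_{\mathbf{u}} \cap J_{\mathbf{v}} = \emptyset$. Condition (H) then forces $\mathrm{dist}(J_{\mathbf{u}}, J_{\mathbf{v}}) \geq C' r^n$, and since $\Phi(\xi) \in J_{\mathbf{u}}$, $\Phi(\eta) \in J_{\mathbf{v}}$, we get $|\Phi(\xi) - \Phi(\eta)| \geq C' r^n \gtrsim \rho_a(\xi,\eta)^\alpha$. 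The one subtlety is that divergence of geodesic rays occurs at the vertical splitting point, but the relevant separation is between the non-adjacent branches; I would need to track the canonical-geodesic decomposition carefully so that the horizontal parts (which are $\mathcal{E}_h$-connected, hence the sets touch) are correctly separated from the vertical parts where condition (H) applies.

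The main obstacle I anticipate is the well-definedness and single-valuedness of $\Phi$ in the presence of overlaps: a boundary point may be represented by several geodesic rays, and a point of $E$ may lie in several basic sets at each level (these give horizontal edges). The heart of the argument is to show that all geodesic rays representing the same $\xi$ yield the same intersection point — which uses that rays represent the same boundary point iff $|\mathbf{u}_n \wedge \mathbf{v}_n| \to \infty$, translating into their basic sets being joined by uniformly bounded horizontal chains and hence having intersecting or asymptotically coincident limits — and dually that points of $E$ sitting in multiple level-$n$ basic sets correspond to horizontally adjacent vertices, i.e. the same boundary point. Getting this equivalence tight, using the uniform horizontal bound from hyperbolicity together with Lemma \ref{OSClem}, is where the real work lies.
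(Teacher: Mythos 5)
Your proposal is correct and follows essentially the same route as the paper: define $\Phi$ along geodesic rays, check well-definedness via the canonical-geodesic decomposition (two vertical parts plus a uniformly bounded horizontal part at level $\ell_n\to\infty$), get the H\"older bound from $|\xi\wedge\eta|=n-(\ell+1)/2$ and $|J_{\mathbf u}|\le r^n|J|$, and get the reverse bound from condition (H) applied to the non-adjacent pair just below the horizontal part of the bilateral geodesic. The only cosmetic difference is that the paper defines $\Phi(\xi)=\lim_n S_{{\mathbf u}_n}(x_0)$ rather than as the nested intersection $\bigcap_n J_{{\mathbf u}_n}$, which sidesteps the fact that a general (non-vertical) geodesic ray need not give a genuinely nested sequence of basic sets.
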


\begin{proof}
Let ${\mathbf u}_0=\emptyset$, for any geodesic ray $\xi=
\pi[{\mathbf u}_1,{\mathbf u}_2,\ldots]$, define
$$\Phi(\xi)=\lim_{n\rightarrow \infty} S_{{\mathbf u}_n}(x_0)$$
for some $x_0\in J$. It is easy to show the mapping is well-defined.
Indeed, if two geodesic rays $\xi= \pi[{\mathbf u}_0,{\mathbf
u}_1,\ldots], \eta= \pi[{\mathbf v}_0,{\mathbf v}_1,\ldots]$ are
equivalent, then there exists a constant $c>0$ such that
$$d({\mathbf u}_n,{\mathbf v}_n)\leq c\delta$$ for all $n\geq 0$, where $\delta>0$
depends only on the hyperbolicity of the graph $X$ \cite{Wo00}. Let
${\mathbf u}_n={\mathbf t}_0,{\mathbf t}_1,\dots,{\mathbf
t}_k={\mathbf v}_n$ be a canonical geodesic from ${\mathbf u}_n$ to
${\mathbf v}_n$, then $k\leq c\delta$. The canonical geodesic can be
written in three parts, two vertical and one horizontal parts:
${\mathbf t}_0,\dots,{\mathbf t}_i$; ${\mathbf t}_i,\dots,{\mathbf
t}_j$ and ${\mathbf t}_j,\dots,{\mathbf t}_k$. For the horizontal
part, we assume that ${\mathbf t}_i,\dots,{\mathbf t}_j\in
X_{\ell_n}$, i.e. the $\ell_n$-th level. Note that $$J_{{\mathbf
t}_0}\subset J_{{\mathbf t}_1}\subset \cdots \subset J_{{\mathbf
t}_i}\quad\text{and}\quad J_{{\mathbf t}_j}\supset J_{{\mathbf
t}_{j-1}}\supset \cdots \supset J_{{\mathbf t}_k}.$$ Then we have
$$|S_{{\mathbf u}_n}(x_0)-S_{{\mathbf t}_i}(x_0)|\leq |J_{{\mathbf
t}_i}|\leq r^{\ell_n}|J|$$ and $$|S_{{\mathbf v}_n}(x_0)-S_{{\mathbf
t}_j}(x_0)|\leq |J_{{\mathbf t}_j}|\leq r^{\ell_n}|J|.$$ Since the
horizontal part $[{\mathbf t}_i,\dots,{\mathbf t}_j]$ lies in the
same horizontal level $\ell_n$, it follows that $r_{{\mathbf
t}_i},\dots,r_{{\mathbf t}_j}\leq r^{\ell_n}$ and $j-i\leq c\delta$,
hence $$|S_{{\mathbf t}_i}(x_0)-S_{{\mathbf t}_j}(x_0)|\leq
(j-i+1)r^{\ell_n}|J|\leq (c\delta+1) r^{\ell_n}|J|.$$ Combining the
above estimates together, we conclude $$|S_{{\mathbf
u}_n}(x_0)-S_{{\mathbf v}_n}(x_0)|\leq Cr^{\ell_n}$$ for some
constant $C>0$.  Obverse that $\lim_{n\rightarrow
\infty}\ell_n=\lim_{n\rightarrow \infty} |{\mathbf
u}_n\wedge{\mathbf v}_n|=+\infty.$  Consequently,
$\lim_{n\rightarrow \infty} S_{{\mathbf
u}_n}(x_0)=\lim_{n\rightarrow \infty} S_{{\mathbf v}_n}(x_0)$ and
$\Phi$ is well-defined.

For any $x\in E$, there exists an infinite word $i_1i_2\cdots\in
D_{\infty}$ such that $$\lim_{n\rightarrow \infty} S_{i_1i_2\cdots
i_n}(x_0)=x.$$ Let ${\mathbf u}_0=\emptyset$ and for every $n\geq
1$, there exists a unique $k_n$ such that $r_{1}\cdots
r_{{k_n}}\leq r^n < r_{1}\cdots r_{{k_n}-1}$, denote ${\mathbf
u}_n=i_1\cdots i_{k_n}$ and $\xi=\pi[{\mathbf u}_0,{\mathbf
u}_1,\ldots]$. Then $\xi\in\partial X$ and $\Phi(\xi)=x$ which
proved that $\Phi$ is surjective. If $\Phi(\xi)=\Phi(\eta)=x\in E$,
then $x\in J_{{\mathbf u}_n}\cap J_{{\mathbf v}_n}$ for $n\geq 0$,
hence $d({\mathbf u}_n,{\mathbf v}_n)\leq 1$, and $\xi, \eta$ are
equivalent. Whence $\Phi$ is bijective.

In the following, we show that $\Phi$ is H\"older continuous. Then
being a bijective continuous map, $\Phi$ is a homeomorphism.  Let
$\xi= \pi[{\mathbf u}_0,{\mathbf u}_1,{\mathbf u}_2,\ldots], \eta=
\pi[{\mathbf v}_0,{\mathbf v}_1,{\mathbf v}_2,\ldots]$ be any two
non-equivalent geodesic rays in $X$. Then there is a bilateral
geodesic $\gamma$ joining $\xi$ and $\eta$ \cite{Wo00}. We assume
that it is canonical:
\begin{equation}\label{equation3.2}
\gamma=\pi[\dots,{\mathbf u}_{n+1},{\mathbf u}_n,{\mathbf
t}_1,\dots,{\mathbf t}_{\ell},{\mathbf v}_n,{\mathbf v}_{n+1},\dots]
\end{equation}
with ${\mathbf u}_n,{\mathbf t}_1,\dots,{\mathbf t}_{\ell},{\mathbf
v}_n\in X_n$, i.e. $n$-th level. It follows that $$|S_{{\mathbf
u}_n}(x_0)-S_{{\mathbf v}_n}(x_0)|\leq (\ell+2) r^n|J|.$$ By the
hyperbolicity of the augmented tree $X$, $\ell$ is uniformly bounded
by a constant which depends only on the graph. Since $\Phi(\xi)\in
J_{{\mathbf u}_k}$ and $\Phi(\eta)\in J_{{\mathbf v}_k}$ for all
$k\geq 0$, we get
$$|\Phi(\xi)-S_{{\mathbf u}_n}(x_0)|,\quad |\Phi(\eta)-S_{{\mathbf v}_n}(x_0)|
\leq r^n|J|.$$ Thus $$|\Phi(\xi)-\Phi(\eta)|\leq
|\Phi(\xi)-S_{{\mathbf u}_n}(x_0)|+|S_{{\mathbf
u}_n}(x_0)-S_{{\mathbf v}_n}(x_0)|+|\Phi(\eta)-S_{{\mathbf
v}_n}(x_0)|\leq C_1 r^n.$$ Since it is a bilateral canonical
geodesic,
 we have $|\xi\wedge\eta|=n-(\ell+1)/2$ and $\ell$ is uniformly bounded. By
 using $\rho_a(\xi,\eta)=\exp(-a |\xi\wedge\eta|)$,
it yields that
$$|\Phi(\xi)-\Phi(\eta)|\leq C\rho_a(\xi,\eta)^{\alpha}.$$

For the second part, if the additional condition holds,  assume that
$\xi \ne \eta$. Since $\gamma$ in (\ref{equation3.2}) is a geodesic,
it follows that $({\mathbf u}_{n+1},{\mathbf v}_{n+1})\notin
{\mathcal {E}}_h$, and hence $J_{{\mathbf u}_{n+1}}\cap J_{{\mathbf
v}_{n+1}}=\emptyset$ which implies
$$|\Phi(\xi)-\Phi(\eta)|\geq dist(J_{{\mathbf u}_{n+1}},J_{{\mathbf v}_{n+1}})\geq
C'r^{n+1},$$ and the theorem follows in view of the definition of
the metric $\rho_a$.

\end{proof}

\end{section}

\bigskip

\begin{section}{\bf Lipschitz equivalence}

Two compact metric spaces $(X,d_X)$ and $(Y,d_Y)$ are said to be
{\em Lipschitz equivalent}, and denote by $X\simeq Y$, if there is a
bi-Lipschitz map  $\sigma$ from $X$ onto $Y$, i.e., $\sigma$ is a
bijection and  there is a constant $C>0$ such that
$$
C^{-1}d_X(x,y)\leq d_Y(\sigma(x),\sigma(y))\leq Cd_X(x,y)\quad
\quad \forall  \ x,y\in X.
$$ Following \cite{LaLu12}, we have

\begin{Def}
Let $X$ and $Y$ be two hyperbolic graphs and let $\sigma: X
\rightarrow Y$  be a bijective map. We say that $\sigma$ is a
near-isometry if there exists $c>0$ such that
$$
 \big ||\pi(\sigma(x),\sigma(y))|-|\pi(x,y)|\big| \leq c \qquad \forall  \ x, y \in X.
$$
\end{Def}

\begin{Prop}(\cite{LaLu12}) \label{th4.2}
 Let $X$, $Y$ be two hyperbolic augmented trees that are equipped with the
hyperbolic metrics with the same parameter  $a$ (as in
(\ref{eq2.1})). Suppose there exists a near-isometry  $\sigma : X
\to Y$, then \ $\partial X \simeq \partial Y$.
\end{Prop}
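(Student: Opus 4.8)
The plan is to extend $\sigma$ to a map $\hat\sigma\colon\partial X\to\partial Y$ of the hyperbolic boundaries and to show that $\hat\sigma$ distorts the metric $\rho_a$ by at most a bounded multiplicative factor. The reason one should expect genuine bi-Lipschitz behaviour, rather than merely bi-Hölder as for a general quasi-isometry, is that a near-isometry carries multiplicative constant exactly $1$. The crux is a Gromov-product estimate at the level of vertices. Writing $o$ for the roots of $X$ and $Y$ and setting $M:=d_Y(\sigma(o),o)$, the near-isometry inequality applied to the pair $(x,o)$ together with the triangle inequality gives $\big||\sigma(x)|-|x|\big|\le c+M$ for every $x$. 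Substituting this and $|d_Y(\sigma(x),\sigma(y))-d_X(x,y)|\le c$ into $|x\wedge y|=\tfrac12(|x|+|y|-d(x,y))$ produces a constant $C_0$, depending only on $c$ and $M$, with
\[
\big|\,|\sigma(x)\wedge\sigma(y)|-|x\wedge y|\,\big|\le C_0\qquad\forall\,x,y\in X.
\]
Thus $\sigma$ roughly preserves Gromov products.

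I would then extend $\sigma$ to the boundary. A point $\xi\in\partial X$ is represented by a geodesic ray $\pi[x_0,x_1,\dots]$, characterized by $|x_n\wedge x_m|\to\infty$ as $n,m\to\infty$. The estimate above gives $|\sigma(x_n)\wedge\sigma(x_m)|\ge|x_n\wedge x_m|-C_0\to\infty$, so $\{\sigma(x_n)\}$ is a $\rho_a$-Cauchy sequence in $Y$ and converges to a boundary point, which I define to be $\hat\sigma(\xi)$; the same estimate shows the value is independent of the chosen ray, since equivalent rays satisfy $|x_n\wedge x_n'|\to\infty$ and hence so do their images. Because $\sigma$ is a bijection and $|d_Y(\sigma(x),\sigma(y))-d_X(x,y)|\le c$, the inverse $\sigma^{-1}$ is also a near-isometry with the same constant; applying the construction to it yields a two-sided inverse of $\hat\sigma$, so $\hat\sigma$ is a bijection of $\partial X$ onto $\partial Y$.

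It remains to pass the vertex estimate to the boundary. Using the standard fact (Woess \cite{Wo00}) that the extended Gromov product $|\xi\wedge\eta|$ agrees, up to an additive error controlled by the hyperbolicity constant $\delta$, with $|x_n\wedge y_n|$ for rays $\pi[x_n],\pi[y_n]$ converging to $\xi,\eta$, the displayed bound upgrades to $\big||\hat\sigma(\xi)\wedge\hat\sigma(\eta)|-|\xi\wedge\eta|\big|\le C_1$ for all $\xi,\eta\in\partial X$. Exponentiating through $\rho_a=\exp(-a\,|\cdot\wedge\cdot|)$ then gives $e^{-aC_1}\rho_a(\xi,\eta)\le\rho_a(\hat\sigma(\xi),\hat\sigma(\eta))\le e^{aC_1}\rho_a(\xi,\eta)$, i.e.\ $\hat\sigma$ is bi-Lipschitz and $\partial X\simeq\partial Y$. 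I expect this last step to be the main obstacle: the boundary Gromov product is only defined up to an $O(\delta)$ ambiguity, as a limit infimum (or supremum) over approximating sequences, so one must check that the vertex-level bound survives the passage to the limit with a uniform constant, and that $\rho_a$ extends to a genuine metric on $\hat X$ — which is precisely what the standing hypothesis $a'<\sqrt2-1$ guarantees. By contrast, the inequality $\big||\sigma(x)|-|x|\big|\le c+M$ and the bijectivity of $\hat\sigma$ are routine once $\sigma^{-1}$ is recognized as a near-isometry.
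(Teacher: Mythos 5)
Your argument is correct, and it is essentially the standard one: the paper itself states this proposition without proof, citing \cite{LaLu12}, and the proof there proceeds exactly as you do --- show that a near-isometry preserves Gromov products up to an additive constant (your $C_0$, absorbing the root displacement $M$ and the hyperbolicity constant $\delta$ when passing to boundary points), then exponentiate through $\rho_a=\exp(-a|\cdot\wedge\cdot|)$ to turn the additive bound into a two-sided multiplicative one. The caveats you flag (the $O(\delta)$ ambiguity in the boundary Gromov product and the fact that $\rho_a$ is only equivalent to a true metric when $a'<\sqrt{2}-1$) only perturb $\rho_a$ by bounded multiplicative factors, so they do not affect the bi-Lipschitz conclusion.
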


According to conditions (iii) and (iv) of Definition \ref{def1.1}, it follows that every basic set $J_{\mathbf i}$ (say ${\mathbf i}\in D_{k-1}$)  gives rise to the same number of subsets (say $n_k$) under the same contraction ratio (say $r_k$)  from one step to the next. That, together with (\ref{add.equ}), implies that, for an augmented tree $X= \bigcup_{n\geq 0}X_n$,   any two words of the level $X_n$  generate the same number of offsprings in the level $X_{n+1}$.

\medskip

By a horizontal connected components of an augmented tree $X$, we
mean a maximal connected horizontal subgraph on some level $X_n$.
Let $\mathscr{C}$ be the set of all horizontal connected components
of $X$.   For $T\in \mathscr {C}$, say,  it lies in the level $X_n$, we
let
$$T\Sigma_n = \{{\bf ui} \in X_{n+1}: {\bf u} \in T, {\bf i} \in
\Sigma_n\}$$  denote the set of offsprings of $T$ in the level $X_{n+1}$, where $\Sigma_n$ denotes the suffix
set of words. By the previous argument, we note that the $\Sigma_n$ only depends on the level $X_n$,  i.e.,  $\Sigma_n=\Pi_{j=a(n)}^{b(n)}\{1,2,\dots, n_j\}$ where $a(n)\leq b(n)$ are positive integers depending on $n$ only.  If no confusion occurs, we write $\Sigma:= \Sigma_n$ for simplicity.

\medskip

With the above notation, we introduce a key concept of this section.

\begin{Def} \label{def4.3}
An augmented tree $X$ is called rearrangeable if $\max\{\#T: T\in
{\mathscr C}\}< \infty$, and for any $T\in {\mathscr C}$ with
$\#T=b$,  its offsprings $T\Sigma$ of the next level can be decomposed into $b$ groups as following
$$T\Sigma =\left({\bigcup}_{k\in \Lambda_1}Z_{k}\right)\ \cup \ \cdots \cup \
\left({\bigcup}_{k\in \Lambda_{b}}Z_{k}\right)$$ such that every
$Z_k\in {\mathscr C}$ consists of the offsprings of $T$ and the
total size of every group is equal to $\#\Sigma$.
\end{Def}

In fact,  the concept of `rearrangeable  augment tree' coincides
with the two additional conditions (v) and (vi) of Moran sets
defined in the introduction. Recall that ${\mathcal M}':={\mathcal
M}(J, \{n_k\}, r)$ is the collection  of all Moran sets satisfying
conditions (v) and (vi). It is easy to verify that if a Moran set
$E\in {\mathcal M}'$  then the induced augmented tree $X$ is
rearrangeable. Moreover, if we assume $n_k \equiv n$ and $r_k \equiv r$ and choose
a fixed IFS in Definition \ref{def1.1}, then the Moran set
degenerates to the self-similar set with equal ratio and the
discussion of this section goes back to the case considered by
\cite{LaLu12}.

\begin{theorem}\label{th4.4}
Suppose the augmented tree $(X, {\mathcal E})$ is rearrangeable.
Then there is a near-isometry between $(X, {\mathcal E})$ and $(X,
{\mathcal E}_v)$ so that $(X, {\mathcal E})\simeq (X, {\mathcal
E}_v)$.
\end{theorem}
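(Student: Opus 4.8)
The plan is to keep the vertex set $X$ fixed and to build from the rearrangement a new rooted tree $\widetilde X$ on $X$, then to show that the identity map is a near-isometry from $(X,\mathcal E)$ onto $\widetilde X$ while $\widetilde X$ is isometric to $(X,\mathcal E_v)$; the conclusion then follows from Proposition \ref{th4.2}. To construct $\widetilde X$, I process the tree level by level: for each horizontal component $T\in\mathscr C$ lying in $X_n$, Definition \ref{def4.3} partitions its offsprings $T\Sigma$ into $b=\#T$ groups $G_1,\dots,G_b$, each a union of whole components $Z_k$ and each of total size $\#\Sigma$; fixing any bijection between the vertices of $T$ and these groups, I declare the vertices of $G_i$ to be the $\widetilde X$-children of the $i$-th vertex of $T$. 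Since the offsprings of distinct components are disjoint and exhaust $X_{n+1}$, every vertex acquires a unique $\widetilde X$-parent, so $\widetilde X$ is a genuine rooted tree preserving the levels $X_n$. By construction each vertex of $X_n$ has exactly $\#\Sigma_n$ children in $\widetilde X$, the same branching as in $(X,\mathcal E_v)$; since two rooted trees that are level-regular with identical branching sequences are isomorphic, and rooted-tree isomorphisms are graph isometries, it remains only to prove that $\mathrm{id}\colon (X,\mathcal E)\to\widetilde X$ is a near-isometry. Throughout I write $L:=\max\{\#T:T\in\mathscr C\}<\infty$.

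Two structural facts will drive the estimate. First, I will show that in $\widetilde X$ every horizontal component shares a single $\widetilde X$-parent: given a horizontal component $Z\subset X_{n+1}$, the axiom of Definition \ref{de2.2} forces the $\mathcal E_v$-parents of its vertices to lie in one component $T\subset X_n$, so $Z\subseteq T\Sigma$ and in fact $Z$ coincides with one of the $Z_k$ in the decomposition of $T\Sigma$; hence $Z$ lies entirely in one group $G_i$ and all its vertices become children of the same vertex of $T$. Second, I will prove a tracking lemma: for any $\mathbf u\in X$, the $\widetilde X$-ancestor and the $\mathcal E_v$-ancestor of $\mathbf u$ at each level lie in a common horizontal component. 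The base case holds because the $\widetilde X$-parent of $\mathbf u$ lies in the same component $T$ as its $\mathcal E_v$-parent; the inductive step again uses Definition \ref{de2.2}, namely that $\mathcal E_v$-parents of horizontally connected vertices are horizontally connected or equal, so that membership in a common component is propagated upward one level at a time.

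With these in hand, the near-isometry reduces to comparing two Gromov products. For $\mathbf u,\mathbf v\in X$, Theorem \ref{th2.3} gives $|\mathbf u\wedge\mathbf v|=l-h/2$, where $l$ is the level and $h\le L-1$ the length of the horizontal part of a canonical geodesic of $(X,\mathcal E)$; in the tree $\widetilde X$ the Gromov product is simply the level $m$ of the meet. Since $|\mathbf u|$ agrees in both graphs, it suffices to bound $m-(l-h/2)$ on both sides. For the upper bound, at the meet level $m$ the two $\widetilde X$-ancestors coincide, so by the tracking lemma the corresponding $\mathcal E_v$-ancestors lie in one component; routing up to level $m$, across that component in at most $L-1$ horizontal edges, and down yields a path showing $|\mathbf u\wedge\mathbf v|\ge m-(L-1)/2$, whence $m$ exceeds $l-h/2$ by at most a constant. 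For the lower bound, at level $l$ the $\mathcal E_v$-ancestors lie in one component $T$, so by the tracking lemma the $\widetilde X$-ancestors also lie in $T$, and the first structural fact supplies a common $\widetilde X$-parent at level $l-1$, giving $m\ge l-1$. Both Gromov products therefore differ by at most a uniform constant, so the two graph distances differ by at most a constant and $\mathrm{id}$ is a near-isometry; composing with the isomorphism $\widetilde X\cong(X,\mathcal E_v)$ and invoking Proposition \ref{th4.2} gives $(X,\mathcal E)\simeq(X,\mathcal E_v)$.

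I expect the main obstacle to be bookkeeping the two distinct ancestor relations — the original $\mathcal E_v$-ancestors used by the canonical geodesics of Theorem \ref{th2.3} and the $\widetilde X$-ancestors that govern the tree metric — and verifying that they never drift apart. The tracking lemma is the linchpin that reconciles them, and its proof rests entirely on the hereditary property of horizontal edges in Definition \ref{de2.2}; the uniform bound $L$ from rearrangeability is precisely what keeps the horizontal excursions, and hence the additive constant, finite. A secondary point requiring care is confirming that the decomposition components $Z_k$ of $T\Sigma$ are exactly the full horizontal components of $X_{n+1}$ that descend from $T$, which is what makes the first structural fact globally consistent across the whole tree.
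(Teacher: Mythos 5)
Your proposal is correct and follows essentially the same route as the paper: the re-parented tree $\widetilde X$ is exactly the paper's bijection $\sigma:(X,\mathcal E)\to(X,\mathcal E_v)$ viewed as a relabelling (the invariant ``each horizontal component becomes a sibling set'' is the paper's condition (\ref{eq4.2})), and the near-isometry is again read off by comparing canonical geodesics via Theorem \ref{th2.3}. The one place you go beyond the paper is the two-sided control of the meet level $m$ via the tracking lemma --- in particular the upper bound $m\le l+(L-1)/2$ obtained by routing through the common horizontal component at level $m$ --- which rigorously justifies the step the paper compresses into the assertion $|l'-l|\le 1$; this is a worthwhile tightening rather than a different proof.
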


\begin{proof}
Let $X=(X,{\mathcal E}),~ Y=(X,{\mathcal E}_v)$. It suffices to
construct a near-isometry $\sigma$ between $X$ and $Y$, and hence
$\partial(X, {\mathcal E} )\simeq
\partial(X, {\mathcal E}_v )$ by Proposition \ref{th4.2}.  We define this $\sigma$ to be a
one-to-one mapping  from $X_n$ (in $X$) to $X_n$ (in $Y$)
inductively as follows: Let
$$
\sigma(o)=o, \quad \hbox {and} \quad \sigma(x)=x, \ x\in X_1.
$$
Suppose  $\sigma$ is defined on the level $n$ (i.e., $X_n$) such
that for every horizontal connected component $T$, $\sigma (T)$ has
the same parent, i.e.,
\begin{equation} \label {eq4.2}
\sigma (x)^{-1} = \sigma (y)^{-1} \qquad  \forall \ x, y \in T
\subset X_n
\end{equation}
(see Figure 1). To define the map $\sigma$ on  $X_{n+1}$, we note
that $T$ in $X_n$ gives rise to horizontal connected components in
$X_{n+1}$. We can write
$$
T\Sigma= {\bigcup}_{k=1}^\ell Z_k.
$$
where $\Sigma$ is the set of suffixes such that $T\Sigma \subset
X_{n+1}$, and $Z_k$ are horizontal connected components consisting
of offsprings of $T$.  Let $\#T=b$ and $\#\Sigma=m$, by the
rearrangeable condition, ${\bigcup}_{k=1}^{\ell}Z_k$ can be
rearranged as $b$ groups so that total size of every group is equal
to $m$, namely,
\begin{equation} \label {eq4.3}
{\bigcup}_{k=1}^{\ell}Z_k={\bigcup}_{k\in \Lambda_1}Z_{k}\ \cup \
\cdots \cup \ {\bigcup}_{k\in \Lambda_{b}}Z_{k}.
\end{equation}
Note that each set on the right has $m$ elements.
\begin{figure}[h]
  \centering
  \includegraphics[width=12cm]{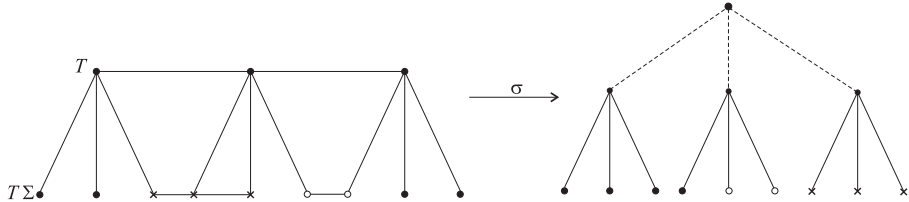}
\caption{ An illustration of the rearrangeable condition by
$\sigma$.}\label{fig.re}
\end{figure}

For the  connected component  $T = \{{\mathbf i}_1,  \dots ,
{\mathbf i}_{b}\} \subset X_n$, we have defined $\sigma$ on $X_n$
and $ \sigma (T) = \{{\mathbf j}_1 = \sigma ({\mathbf i}_1), \ \dots
,\ {\mathbf j}_{b} = \sigma ({\mathbf i}_{b})\}. $ In view of (\ref
{eq4.3}), we define $\sigma$ on $T\Sigma =
{\bigcup}_{k=1}^{\ell}Z_k$ by assigning each ${\bigcup}_{k\in
\Lambda_s}Z_{k}$ (it has $m$ elements) the $m$ descendants of
${\mathbf j}_s$ (see Figure \ref{fig.re}). It is clear that $\sigma$
is well-defined on $T\Sigma$ and satisfies (\ref{eq4.2}) for $x,y
\in T\Sigma$. We apply the same construction of $\sigma$ on the
offsprings of every horizontal connected component in $X_n$. It
follows that $\sigma$ is well-defined  and satisfies (\ref{eq4.2})
on $X_{n+1}$. Inductively, $\sigma$ can be defined from $X$ to $Y$
and  is bijective.

\medskip

Finally we show that  $\sigma$ is indeed a near-isometry and
complete the proof.  Since $\sigma: X \to Y$ preserves the levels,
hence without loss of generality, it suffices to prove the
near-isometry for $\mathbf{x,y}$ belong to the same level. Let
$\pi(\mathbf{x,y})$  be the canonical geodesic connecting them,
which can be written as
$$
\pi(\mathbf {x,y})=[{\mathbf x}, {\mathbf u}_1,\dots, {\mathbf u}_n,
{\mathbf t}_1,\dots, {\mathbf t}_k, {\mathbf v}_n,\dots, {\mathbf
v}_1, {\mathbf y}]
$$
where $[{\mathbf t}_1,\dots, {\mathbf t}_k]$ is the horizontal part
and $[{\mathbf x}, {\mathbf u}_1,\dots, {\mathbf u}_n, {\mathbf
t}_1],~ [{\mathbf t}_k, {\mathbf v}_n,\dots, {\mathbf v}_1, {\mathbf
y}]$ are vertical parts. Clearly, $\{{\mathbf t}_1,\dots, {\mathbf
t}_k\}$ must be included in one horizontal connected component of
$X$, we denote it by $T'$. With the notation as in Theorem
\ref{th2.3}(i), it follows  that for ${\mathbf x}\ne {\mathbf y}\in
X$,
$$
|\pi(\mathbf {x,y})| =|{\mathbf x}|+|{\mathbf y}|-2l+h,  \qquad |\pi
(\sigma({\mathbf x}),\sigma({\mathbf y}))|=|\sigma({\mathbf
x})|+|\sigma({\mathbf y})|-2l^{\prime}+h^{\prime}.
$$
We have $$\big ||\pi (\sigma({\mathbf x}),\sigma({\mathbf
y}))|-|\pi(\mathbf {x,y})|\big |\leq |h-h'|+2|l'-l|\leq k+2|l'-l|$$
where $k$ is a hyperbolic constant as in Theorem \ref{th2.3}(ii). If
$T'$ is a singleton, then
$$|l'-l|=0.$$
If  $T'$ contains more than one point, then the elements of
$\sigma(T')$ share the same parent. Then the confluence of
$\sigma({\mathbf x})$ and $\sigma({\mathbf y})$ (as a tree) is
$\sigma ({\mathbf x)}^{-1} \ ( = \sigma ({\mathbf y)}^{-1})$. Hence
$$|l'-l|=1.$$
Consequently,
$$
\big ||\pi(\sigma({\mathbf x}),\sigma({\mathbf y}))|-|\pi(\mathbf
{x, y})|\big |\leq k+2.
$$
This completes the proof that $\sigma$ is a near-isometry and the
theorem is established.

\end{proof}

\begin{Cor}
Under the assumption on the above theorem,  then  $(\partial (X,
{\mathcal E}), \rho_a)$  is totally disconnected.
\end{Cor}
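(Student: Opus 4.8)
The plan is to transport the problem from the augmented tree $(X,\mathcal E)$ to the bare tree $(X,\mathcal E_v)$ and then exploit the ultrametric nature of a tree boundary. By Theorem~\ref{th4.4} there is a near-isometry between $(X,\mathcal E)$ and $(X,\mathcal E_v)$, and by Proposition~\ref{th4.2} this yields a bijective bi-Lipschitz map between $\partial(X,\mathcal E)$ and $\partial(X,\mathcal E_v)$. A bi-Lipschitz map is in particular a homeomorphism, and total disconnectedness is a topological invariant; hence it suffices to prove that $\partial(X,\mathcal E_v)$ is totally disconnected, and the conclusion for $\partial(X,\mathcal E)$ follows by transporting back.

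Next I would observe that $(X,\mathcal E_v)$ is a genuine tree: the only edges are the vertical ones, so there are no horizontal edges whatsoever and every horizontal geodesic has length zero. By Theorem~\ref{th2.3}(ii) the tree is hyperbolic with constant $\delta=0$; equivalently, the Gromov product obeys the \emph{sharp} inequality
\[
|\xi\wedge\eta|\ge \min\{|\xi\wedge\zeta|,\ |\zeta\wedge\eta|\}
\]
for all boundary points $\xi,\eta,\zeta$, reflecting the elementary fact that in a tree $|\xi\wedge\eta|$ is exactly the level of the confluence, i.e. the last common vertex of the geodesic rays representing $\xi$ and $\eta$. Feeding this into $\rho_a(\xi,\eta)=\exp(-a|\xi\wedge\eta|)$ converts the sharp inequality into the strong triangle inequality
\[
\rho_a(\xi,\eta)\le \max\{\rho_a(\xi,\zeta),\ \rho_a(\zeta,\eta)\},
\]
so that $(\partial(X,\mathcal E_v),\rho_a)$ is an \emph{ultrametric} space.

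Finally I would invoke the standard consequence that an ultrametric space is totally disconnected. Concretely, given distinct boundary points $\xi\ne\eta$, choose $r$ with $0<r<\rho_a(\xi,\eta)$; the strong triangle inequality makes the closed ball $B(\xi,r)$ simultaneously open and closed, so it separates $\xi$ from $\eta$ inside $\partial(X,\mathcal E_v)$. Thus no connected subset can contain two distinct points, and the tree boundary is totally disconnected; the first paragraph then delivers the claim for $(\partial(X,\mathcal E),\rho_a)$.

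No serious obstacle is expected: the argument is essentially routine once the reduction of the first step is in place. The only point requiring genuine care is the identification in the second paragraph, namely that the absence of horizontal edges forces $\delta=0$ through Theorem~\ref{th2.3}(ii) and that on the boundary $|\xi\wedge\eta|$ coincides with the confluence level, which is what upgrades the ordinary triangle inequality to the ultrametric one.
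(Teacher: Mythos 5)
Your proof is correct and is exactly the argument the paper intends: the corollary is stated without proof as an immediate consequence of Theorem~\ref{th4.4}, the point being that $\partial(X,\mathcal{E}_v)$ is the boundary of a genuine tree, hence ultrametric under $\rho_a$ and therefore totally disconnected, and total disconnectedness transports across the bi-Lipschitz (hence homeomorphic) identification with $\partial(X,\mathcal{E})$. One cosmetic remark: Theorem~\ref{th2.3}(ii) only asserts hyperbolicity, not that $\delta=0$, but your direct observation that in a tree $|\xi\wedge\eta|$ equals the level of the confluence already yields the sharp inequality, so the citation is inessential and the argument stands.
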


By Theorem \ref{th4.4},  we obtain the following Lipschitz
equivalence on  Moran sets. Furthermore, Theorem \ref{th1.3} can be proved
as well.

\medskip

\begin{theorem}\label{th4.6}
Let $E,E'\in{\mathcal M}$ be two moran sets, and satisfy condition
(H). Assume the associated augmented trees both are rearrangeable.
Then $E$ and $E'$ are Lipschitz equivalent.
\end{theorem}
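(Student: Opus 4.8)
The plan is to chain together the two main results already established in the paper. First I would invoke Theorem \ref{thm. of moran sets} (the Hölder equivalence statement), which under condition (H) gives, for each of $E$ and $E'$, a bijection $\Phi$ (resp.\ $\Phi'$) between the hyperbolic boundary of the induced augmented tree and the Moran set itself, satisfying the two-sided Hölder bound (\ref{eq.holder homeo.}). Explicitly, writing $(X,{\mathcal E})$ for the augmented tree of $E$ and $(X',{\mathcal E}')$ for that of $E'$, I get
$$
C^{-1}|\Phi(\xi)-\Phi(\eta)|\leq \rho_a(\xi,\eta)^{\alpha}\leq C|\Phi(\xi)-\Phi(\eta)|,
$$
and the analogous estimate for $\Phi'$ with exponent $\alpha'=-\log r'/a$. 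The upshot is that $E$ is bi-Lipschitz equivalent to the \emph{metric space} $(\partial(X,{\mathcal E}),\rho_a^{\alpha})$, and similarly for $E'$; this reduces the theorem to proving that the two hyperbolic boundaries are Lipschitz equivalent (after matching up the Hölder exponents).

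Next I would apply Theorem \ref{th4.4} to each tree. Since both augmented trees are assumed rearrangeable, that theorem produces near-isometries $\sigma:(X,{\mathcal E})\to(X,{\mathcal E}_v)$ and $\sigma':(X',{\mathcal E}')\to(X',{\mathcal E}'_v)$, whence by Proposition \ref{th4.2} we obtain Lipschitz equivalences of boundaries
$$
\partial(X,{\mathcal E})\simeq\partial(X,{\mathcal E}_v),\qquad \partial(X',{\mathcal E}')\simeq\partial(X',{\mathcal E}'_v).
$$
The boundaries $\partial(X,{\mathcal E}_v)$ and $\partial(X',{\mathcal E}'_v)$ are the boundaries of the \emph{bare trees} (vertical edges only), i.e.\ the symbolic coding spaces $D_\infty$ (resp.\ $D'_\infty$) with their natural ultrametrics. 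The remaining task is therefore to show that these two coding spaces are Lipschitz equivalent as ultrametric spaces.

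The final and most delicate step is comparing the two symbolic spaces. By the remark following (\ref{add.equ}), in each tree every vertex at level $n$ has the \emph{same} number of offspring (determined by the level, not the vertex), so $\partial(X,{\mathcal E}_v)$ is a ``homogeneous'' Cantor-type space in which the number of children at level $n$ is fixed. The level structure of $X$ is exactly the grouping of the Moran indices $i_1\cdots i_k$ into blocks cut out by the rule $r_1\cdots r_k\le r^n<r_1\cdots r_{k-1}$; since here $r_k\equiv r$, we in fact have $X_n=D_n$, so the branching number at level $n$ is $n_n$ for $E$ and $n'_n$ for $E'$. A bi-Lipschitz map between $D_\infty$ and $D'_\infty$ in their tree metrics then amounts to matching cylinder diameters up to a bounded multiplicative factor while respecting the branching; this is where the condition that the two sets share the \emph{same} data $\{n_k\}$ (so that $D_\infty=D'_\infty$ and the branching profiles coincide level by level) must be used, and I expect this bookkeeping—reconciling the exponents $\alpha,\alpha'$ and showing the resulting map on coding spaces is genuinely bi-Lipschitz rather than merely Hölder—to be the main obstacle. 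Once the identity $\partial(X,{\mathcal E}_v)\simeq\partial(X',{\mathcal E}'_v)$ is in hand, composing
$$
E\ \simeq\ \partial(X,{\mathcal E})\ \simeq\ \partial(X,{\mathcal E}_v)\ \simeq\ \partial(X',{\mathcal E}'_v)\ \simeq\ \partial(X',{\mathcal E}')\ \simeq\ E'
$$
yields a bi-Lipschitz bijection $E\to E'$, completing the proof.
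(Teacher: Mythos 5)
Your proposal follows essentially the same route as the paper: Hölder bijections from Theorem \ref{thm. of moran sets} under condition (H), composed with the boundary equivalences supplied by Theorem \ref{th4.4} and Proposition \ref{th4.2}. The one step you flag as the ``main obstacle''---matching $\partial(X,{\mathcal E}_v)$ with $\partial(X',{\mathcal E}'_v)$---is in the paper an outright equality rather than something to prove, because $E$ and $E'$ both lie in the \emph{same} class ${\mathcal M}(J,\{n_k\},\{r_k\})$, so the vertical trees (and the value $r=\inf_k r_k$, hence the exponents $\alpha=\alpha'$) literally coincide and no further bookkeeping is required.
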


\begin{proof}
It follows from Theorem \ref{th4.4} that
\begin{equation*}
\partial(X, {\mathcal E}) \simeq \partial(X, {\mathcal E}_v) = \partial( Y,
{\mathcal E}_v )\simeq \partial(Y, {\mathcal E})
\end{equation*}
(for the respective metrics $\rho_a$). Let $\varphi:
\partial(X, {\mathcal E})  \to \partial(Y, {\mathcal E})$
be the bi-Lipschitz map. With no confusion, we just denote these two
boundaries by $\partial X$, $\partial Y$ as before.

By Theorem \ref{thm. of moran sets}, there exist two bijections
$\Phi_1:\partial X\to E$ and $\Phi_2:\partial Y\to E'$ satisfying
(\ref{eq.holder homeo.}) with constants $C_1,C_2$, respectively.
Define $\tau: E \to E'$ as
$$
\tau = \Phi_2\circ \varphi\circ \Phi_1^{-1}.
$$
Then
$$
\begin {aligned}
|\tau(x) -\tau (y)| & \leq
C_2\ \rho_a(\varphi\circ\Phi_1^{-1}(x),\varphi\circ\Phi_1^{-1}(y))^\alpha\\
&\leq C_2C_0^\alpha\ \rho_a(\Phi_1^{-1}(x),\Phi_1^{-1}(y))^\alpha\\
&\leq C_2C_0^\alpha C_1\ |x-y|.
\end{aligned}
$$
Let $C' = C_2C_0^\alpha C_1$, then
$$
|\tau (x) - \tau (y)| \leq C'|x -y|.
$$
Similarly, we have ${C'}^{-1} |x-y| \leq |\tau (x) - \tau (y)|$.
Therefore $\tau : E \to E'$ is a bi-Lipschitz map.
\end{proof}

\medskip

Under the H\"older equivalent property (\ref{eq.holder homeo.}), the
proof of the above theorem still yields an interesting result.

\begin{Cor}
Let $E,E'\in{\mathcal M}$ be two Moran sets satisfying condition
(H), let $\partial X $ and $\partial Y $ be their hyperbolic
boundaries, respectively. Then
\begin{equation*}
E \simeq E'\ \Leftrightarrow \ \partial X  \simeq \partial Y.
\end{equation*}
\end{Cor}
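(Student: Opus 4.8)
The plan is to read this Corollary as the bijective dictionary furnished by Theorem \ref{thm. of moran sets}, which converts Lipschitz equivalence of the Moran sets into Lipschitz equivalence of their hyperbolic boundaries and back. First I would invoke Theorem \ref{thm. of moran sets} to obtain bijections $\Phi_1:\partial X\to E$ and $\Phi_2:\partial Y\to E'$, each satisfying (\ref{eq.holder homeo.}), say with constants $C_1,C_2$ and H\"older exponents $\alpha_1,\alpha_2$. Since $\alpha_i=-\log r_i/a_i$ depends on the chosen metric parameter $a_i$, while the topology of each boundary is unaffected by the choice of $a_i$ as long as $a_i'<\sqrt2-1$, I would fix $a_1,a_2$ small enough that $\alpha_1=\alpha_2=:\alpha$. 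With this normalization each $\Phi_i$ is, by (\ref{eq.holder homeo.}), a bi-Lipschitz homeomorphism from the snowflaked boundary $(\partial X,\rho_{a_1}^{\alpha})$ onto $(E,|\cdot|)$ (and correspondingly for $\partial Y$, $E'$).

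For the implication $\partial X\simeq\partial Y\Rightarrow E\simeq E'$, I would simply reproduce the computation already carried out in the proof of Theorem \ref{th4.6}: given a bi-Lipschitz $\varphi:\partial X\to\partial Y$ with constant $C_0$, set $\tau=\Phi_2\circ\varphi\circ\Phi_1^{-1}$ and chain the three estimates --- the upper half of (\ref{eq.holder homeo.}) for $\Phi_2$, then the Lipschitz bound for $\varphi$ raised to the power $\alpha$, then the upper half for $\Phi_1^{-1}$ --- to get $|\tau(x)-\tau(y)|\le C_2C_0^{\alpha}C_1|x-y|$; the reverse inequality is symmetric, so $\tau$ is bi-Lipschitz.

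For the converse $E\simeq E'\Rightarrow\partial X\simeq\partial Y$, I would run the same chain backwards. Given a bi-Lipschitz $\sigma:E\to E'$ with constant $C_0$, set $\psi=\Phi_2^{-1}\circ\sigma\circ\Phi_1:\partial X\to\partial Y$ and estimate
\[
\rho_{a_2}(\psi(\xi),\psi(\eta))^{\alpha}\le C_2|\sigma\Phi_1(\xi)-\sigma\Phi_1(\eta)|\le C_2C_0|\Phi_1(\xi)-\Phi_1(\eta)|\le C_2C_0C_1\,\rho_{a_1}(\xi,\eta)^{\alpha},
\]
using the lower half of (\ref{eq.holder homeo.}) for $\Phi_2$, the Lipschitz bound for $\sigma$, and the upper half for $\Phi_1$; taking $\alpha$-th roots gives the upper Lipschitz bound for $\psi$, and the symmetric argument gives the lower one, so $\psi$ is bi-Lipschitz for $(\rho_{a_1},\rho_{a_2})$.

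The argument is a routine composition of bi-Lipschitz maps once the H\"older dictionary of Theorem \ref{thm. of moran sets} is in hand; the one step I would flag as needing care --- the only genuine, if modest, subtlety --- is the normalization $\alpha_1=\alpha_2$. Without it the snowflake exponents of the two boundaries disagree and the compositions above yield merely H\"older rather than Lipschitz maps. I would therefore make explicit that the metric parameters $a_1,a_2$ are chosen to equalize the exponents, which is always possible since letting $a_i\to0$ drives $\alpha_i\to+\infty$ while preserving $a_i'<\sqrt2-1$, so any sufficiently large common value $\alpha$ is attainable.
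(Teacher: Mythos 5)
Your proposal is correct and follows essentially the same route as the paper, which proves this corollary implicitly by observing that the two-sided estimate (\ref{eq.holder homeo.}) lets the composition argument of Theorem \ref{th4.6} be run in both directions. The exponent normalization you flag is automatic here: since $E,E'\in{\mathcal M}$ belong to the same class they share the same ratio $r$, and the boundaries are equipped with metrics $\rho_a$ for a common parameter $a$ (as in Proposition \ref{th4.2}), so $\alpha_1=\alpha_2=-\log r/a$ without any adjustment.
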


\end{section}

\bigskip
\bigskip

\noindent {\it Acknowledgements}:  The author would like to thank
Professor Ka-Sing Lau  for many valuable comments.

\bigskip

\end{document}